\newcommand{\norm}[1]{\left\lVert#1\right\rVert}
\newtheorem{theorem}{Теорема}
\newtheorem{lemma}{Лемма}
\newtheorem{corollary}{Следствие}
\newtheorem{definition}{Определение}
\newtheorem{remark}{Замечание}
\newtheorem{statement}{Утверждение}
\newcommand\myeq{\mathrel{\stackrel{\makebox[0pt]{\mbox{\normalfont\tiny def}}}{=}}}
\newcommand{\leqarg}[1]{\ensuremath{\stackrel{\text{#1}}{\leq}}}
\newcommand{\eqarg}[1]{\ensuremath{\stackrel{\text{#1}}{=}}}
\newcommand\abs[1]{\left|#1\right|}
\DeclareMathOperator*{\argmin}{arg\,min}
\newcommand*\circled[1]{\tikz[baseline=(char.base)]{
		\node[shape=circle,draw,inner sep=2pt] (char) {#1};}}
\begin{document}

\begin{center}
\bf
Адаптивный быстрый градиентный метод в задачах стохастической оптимизации

Adaptive fast gradient method in stochastic optimization tasks
\end{center}

\begin{center}
\textit{
Александр Игоревич Тюрин
\footnote{atyurin@hse.ru, Национальный исследовательский университет «Высшая школа экономики», 101000, Россия, г. Москва, ул. Мясницкая, д. 20 \\
Alexander Tyurin, National Research University Higher School of Economics, Myasnitskaya str. 20, Moscow, Russia 101000
}}
\end{center}

\begin{abstract}
В данной работе приводится стохастический адаптивный ускоренный градиентный метод на основе зеркального варианта метода подобных треугольников. На сколько мы знаем, это первая попытка добавления адаптивности в стохастический метод. Ко всему прочему, главный результат приводится в виде оценки вероятностей больших уклонений.

In this paper, we describe a stochastic adaptive fast gradient descent method based on the mirror variant of similar triangles method. To our knowledge, this is the first attempt to use adaptivity in stochastic method. Additionally, a main result was proved in terms of probabilities of large deviations.
\end{abstract}

\textbf{Keywords} Fast Gradient Descent, Composite Optimization, Stochastic Optimization, Adaptive Optimization

\textbf{Ключевые слова} Быстрый Градиентный Метод, Композитная оптимизация, Стохастическая Оптимизация, Адаптивная Оптимизация

\textbf{Mathematics Subject Classification} 90C25, 90C60, 90C15

\textbf{УДК} 519.85

\section{Введение}
В данной работе представлен алгоритм стохастической оптимизации, который является обобщением быстрого градиентного метода \cite{nesterov2013introductory}. Методы стохастической оптимизации в последнее время являются популярными по той причине, что они позволяют уменьшать сложность подсчета градиента, что является очень важным, так как существуют примеры функций, в которых невозможно за разумное время подсчитать градиент оптимизируемой функции хотя бы в одной точке. Помимо этого, в задачах стохастической оптимизации \cite{gasnikov2016netrivialnosti} в силу формулировки самой задачи единственный разумный способ получить направление спуска -- это сэмлирование векторов, которые имеют несмещенные по отношению к истинному градиенту направления. Данный подход также популярен в глубинном обучении \cite{krizhevsky2012imagenet}, в задачах, которые имеют вид суммы функций \cite{gasnikov2016netrivialnosti}. Стоит отметить, что сложность подсчета стохастического градиента, как правило, можно регулировать. Начиная с того, что можно считать честно полностью градиент, и заканчивая тем, что можно брать случайную грубую оценку настоящего градиента, но при этом стоит понимать, что дисперсия данной оценки может довольно сильно влиять на скорость сходимости \cite{lan2012optimal,devolder2013exactness}. В данной работе предлагается использовать технику mini-batch, которая позволяет агрегировать случайные несмещенные оценки в достаточном количестве, при этом данная техника приводит к оптимальным оценкам. Ко всему прочему, данный метод является адаптивным \cite{gasnikov2018universal} и поддерживает наличие композита \cite{lan2012optimal}.

\section{Адаптивный зеркальный вариант метода подобных треугольников с неточным выборочным стохастическим $(\delta, L)$-оракулом} \label{sec:mmtDLST}

Опишем сначала общую постановку задачи выпуклой оптимизации \cite{nesterov2010introductory}. 
Пусть определены функции $f(x): Q \longrightarrow \mathds{R}$ и $h(x): Q \longrightarrow \mathds{R}$ и дана произвольная норма $\norm{\cdot}$ в $\mathds{R}^n$. Обозначим $F(x)$, как $F(x) \myeq f(x) + h(x)$. Сопряженная норма определяется следующим образом:
\begin{gather}\norm{\lambda}_* \myeq \max\limits_{\norm{\nu} \leq 1;\nu \in \mathds{R}^n}\langle \lambda,\nu\rangle,\,\,\,\forall \lambda \in \mathds{R}^n.\end{gather}
Будем полагать, что
\begin{enumerate}
	\item $Q \subseteq \mathds{R}^n$, выпуклое, замкнутое.
	\item $f(x)$ и $h(x)$ -- непрерывные и выпуклые функции на $Q$.
	\item $f(x)$ -- гладкая функция.
	\item $F(x)$ ограничена снизу на $Q$ и достигает своего минимума в некоторой точке (необязательно единственной) $x_* \in Q$.
\end{enumerate}

Рассмотрим следующую задачу оптимизации:
\begin{align}
\label{mainTask3}
F(x) \rightarrow \min_{x \in Q}.
\end{align}

Введем два понятия: прокс-функция и дивергенция Брэгмана \cite{gupta2008bregman}.
\begin{definition}
$d(x):Q \rightarrow \mathds{R}$ называется прокс-функцией, если $d(x)$ непрерывно дифференцируемая на $\textnormal{int }Q$ и $d(x)$ является 1-сильно выпуклой относительно нормы $\norm{\cdot}$ на $\textnormal{int }Q$.
\end{definition}
\begin{definition}
Дивергенцией Брэгмана называется 
\begin{align}
V(x,y) \myeq d(x) - d(y) - \langle\nabla d(y), x - y\rangle,
\end{align}
где $d(x)$ -- произвольная прокс-функция.
\end{definition}
Легко показать \cite{ben2001lectures}, что \begin{gather}V(x,y) \geq \frac{1}{2}\norm{x - y}^2.\label{bregmanIneq}\end{gather}

\begin{definition}
\leavevmode
\label{defdeltaLstoch}
Стохастическим $(\delta, L)$-оракулом будем называть оракул, который на запрашиваемую точку $y \in Q$ дает пару $\left(f_\delta(y), \nabla f_\delta(y;\xi)\right)$ такую, что
\begin{gather}
\label{exitLDLSTOrig}
0 \leq f(x) - f_\delta(y) - \langle\nabla f_\delta(y), x - y\rangle \leq \frac{L}{2}\norm{x - y}^2 + \delta ,\,\,\, \forall x \in Q,
\end{gather}
\begin{gather}
\label{ST1}
\mathbb{E}\left[\nabla f_\delta(y;\xi)\right] = \nabla f_\delta(y),\,\,\, \forall y \in Q,
\end{gather}
\begin{gather}
\label{ST2}
\mathbb{E}\left[\exp\Bigg(\frac{\norm{\nabla f_\delta(y;\xi) - \nabla f_\delta(y)}^2_*}{\sigma^2}\Bigg)\right] \leq \exp(1),\,\,\, \forall y \in Q,
\end{gather}
где $\sigma^2 > 0$.
\end{definition}
Будем предполагать, что для $f(x)$ существует стохастический $(\delta, L)$-оракул.

\begin{statement}

Возьмем $x = y$ в (\ref{exitLDLSTOrig}), тогда
\begin{gather}
\label{exitLDLOrig2}
f_\delta(y) \leq f(y) \leq f_\delta(y) + \delta,\,\,\,\forall y \in Q.
\end{gather}
\end{statement}

Определим константу $R_Q$ такую, что
\begin{align}
R_Q \geq \max_{x,y \in Q}\norm{x - y}.
\end{align} Будем считать, что $R_Q < \infty$.

В методе, который мы предложим далее, будем оценивать истинный градиент на каждом шаге с помощью некоторого количества $\nabla f_\delta(y;\xi_j),\, j \in [1\dots m_{k+1}]$, используя технику mini-batch.

Обозначим
\begin{gather}
\widetilde{\nabla}^{m_{k+1}} f_\delta(y) \myeq \frac{1}{m_{k+1}}\sum_{j=1}^{m_{k+1}}\nabla f_\delta(y;\xi_j).
\end{gather}

Приведем важную лемму:
\begin{lemma}
\label{corST2}

Пусть $\left(f_\delta(y), \nabla f_\delta(y;\xi_i)\right),\,i = 1,\dots,m_{k+1}\,$ - $m_{k+1}$ независимых выхода стохастического $(\delta, L)$-оракула, $x, y \in Q$ -- случайные векторы, $y$ и $\xi_i\,\,i = 1,\dots,m_{k+1}$ -- независимы, $\widetilde{L}$ случайная константа, такая, что $\widetilde{L} \geq 3L/2$ почти наверное, $\kappa$ -- константа регулярности\footnote{Можно показать: если $\norm{\cdot}_* = \norm{\cdot}_2$, то $\kappa = 1$. Более того, если $\norm{\cdot}_* = \norm{\cdot}_q$, $q \in [2,\infty]$, то $\kappa = \min\left[q-1,2\ln(n)\right]$.} из \cite{juditsky2008large} для $(\mathds{R}^n,\norm{\cdot}_*)$ и выбрано произвольное $\Omega \geq 0$, тогда
\begin{align*}
&\hphantom{{}={}}\mathbb{P}\Bigg(f_\delta(x) - f_\delta(y) - \langle\widetilde{\nabla}^{m_{k+1}} f_\delta(y), x - y\rangle > (2\kappa + 4\Omega\sqrt{\kappa} + 2\Omega^2)\frac{3\sigma^2}{\widetilde{L}m_{k+1}} + \frac{\widetilde{L}}{2}\norm{x - y}^2 + \delta\Bigg) \\&\leq \exp(-\Omega^2/3).
\end{align*}
\end{lemma}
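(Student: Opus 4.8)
\textit{Plan of proof.} Write $m \myeq m_{k+1}$ and introduce the mini-batch error $e \myeq \widetilde{\nabla}^{m} f_\delta(y) - \nabla f_\delta(y) = \frac1m\sum_{j=1}^m \zeta_j$ with $\zeta_j \myeq \nabla f_\delta(y;\xi_j) - \nabla f_\delta(y)$. The plan is to peel off the deterministic $(\delta,L)$-structure first and then isolate a single scalar random quantity, $\norm{e}_*$, on which a vector large-deviation inequality can act. First I would split
\[ f_\delta(x) - f_\delta(y) - \langle \widetilde{\nabla}^{m} f_\delta(y), x-y\rangle = \big(f_\delta(x) - f_\delta(y) - \langle \nabla f_\delta(y), x-y\rangle\big) - \langle e, x-y\rangle. \]
Since (\ref{exitLDLOrig2}) gives $f_\delta(x) \le f(x)$, the first parenthesis is bounded above, through (\ref{exitLDLSTOrig}), by $\frac{L}{2}\norm{x-y}^2 + \delta$. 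Hence the left-hand side is at most $\frac{L}{2}\norm{x-y}^2 + \delta - \langle e, x-y\rangle$, and this inequality holds pathwise, for every realization and every $x \in Q$ at once (so no independence of $x$ from the sample is required).

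Next I would control the cross term by Fenchel--Young with parameter $\widetilde{L}/3$: $-\langle e,x-y\rangle \le \norm{e}_*\norm{x-y} \le \frac{3}{2\widetilde{L}}\norm{e}_*^2 + \frac{\widetilde{L}}{6}\norm{x-y}^2$. The hypothesis $\widetilde{L}\ge 3L/2$ is exactly what makes $\frac{L}{2}+\frac{\widetilde{L}}{6}\le \frac{\widetilde{L}}{2}$, so collecting terms gives the pathwise bound
\[ f_\delta(x) - f_\delta(y) - \langle \widetilde{\nabla}^{m} f_\delta(y), x-y\rangle \le \frac{\widetilde{L}}{2}\norm{x-y}^2 + \delta + \frac{3}{2\widetilde{L}}\norm{e}_*^2. \]
On the event in the statement the left-hand side exceeds $2(\sqrt{\kappa}+\Omega)^2\frac{3\sigma^2}{\widetilde{L}m} + \frac{\widetilde{L}}{2}\norm{x-y}^2 + \delta$, using $2\kappa+4\Omega\sqrt{\kappa}+2\Omega^2 = 2(\sqrt{\kappa}+\Omega)^2$. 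Combining with the pathwise bound, the $\frac{\widetilde{L}}{2}\norm{x-y}^2+\delta$ terms cancel, and multiplying the surviving inequality by $\frac{2\widetilde{L}}{3}>0$ shows that this event is contained in $\big\{\norm{e}_* > 2(\sqrt{\kappa}+\Omega)\frac{\sigma}{\sqrt{m}}\big\}$. Crucially, the random quantities $\widetilde{L}$, $x$ and $y$ have all dropped out of the threshold.

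It therefore remains to prove $\mathbb{P}\big(\norm{e}_* > 2(\sqrt{\kappa}+\Omega)\sigma/\sqrt{m}\big) \le \exp(-\Omega^2/3)$. Here I would condition on $y$: since $y$ is independent of $\xi_1,\dots,\xi_m$, for each fixed value of $y$ the $\zeta_j$ are independent, zero-mean by (\ref{ST1}), and satisfy the light-tail bound $\mathbb{E}[\exp(\norm{\zeta_j}_*^2/\sigma^2)]\le\exp(1)$ coming from (\ref{ST2}). The sum $\sum_j\zeta_j$ then falls under the Juditsky--Nemirovski large-deviation inequality for vector martingales in a $\kappa$-regular space \cite{juditsky2008large}, whose variance proxy here is $\sqrt{\sum_j\sigma^2}=\sigma\sqrt{m}$; for $e=\frac1m\sum_j\zeta_j$ it yields a deviation threshold of the form $(\sqrt{2\kappa}+\sqrt{2}\,\Omega)\sigma/\sqrt{m}$ with exceedance probability $\exp(-\Omega^2/3)$. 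Since $2(\sqrt{\kappa}+\Omega)\ge\sqrt{2\kappa}+\sqrt{2}\,\Omega$, the target event sits inside this one, and integrating the conditional estimate over $y$ gives the claim.

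The main obstacle is precisely this last step: pinning down the exact universal constants of the cited inequality so that its threshold is genuinely dominated by $2(\sqrt{\kappa}+\Omega)\sigma/\sqrt{m}$ and the exponent is exactly $\Omega^2/3$. The deliberate factor $2$ (rather than $\sqrt{2}$) in the statement is there to absorb these constants; everything preceding it is elementary, the only care being the bookkeeping that lets $\widetilde{L}$, $x$ and $y$ cancel out before the probabilistic estimate is invoked.
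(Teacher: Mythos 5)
Your proposal is correct and follows essentially the same route as the paper: a pathwise reduction via the $(\delta,L)$-oracle bound and Fenchel--Young with parameter $\widetilde{L}/3$ to isolate $\norm{e}_*^2$, followed by the Juditsky--Nemirovski vector deviation inequality applied conditionally on $y$. The only cosmetic difference is that you keep the sharp Fenchel constant $\tfrac{3}{2\widetilde{L}}$ and then absorb the slack through $2(\sqrt{\kappa}+\Omega)\ge\sqrt{2\kappa}+\sqrt{2}\,\Omega$, whereas the paper uses the looser bound $\tfrac{3}{\widetilde{L}}\norm{e}_*^2$ so that the threshold matches the cited inequality exactly.
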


\begin{proof}
Рассмотрим правую часть условия (\ref{exitLDLSTOrig}):
\begin{gather*}
f(x) - f_\delta(y) - \langle\nabla f_\delta(y), x - y\rangle \leq \frac{L}{2}\norm{x - y}^2 + \delta.
\end{gather*}
Учтем (\ref{exitLDLOrig2}), тогда
\begin{gather*}
f_\delta(x) - f_\delta(y) - \langle\nabla f_\delta(y), x - y\rangle \leq \frac{L}{2}\norm{x - y}^2 + \delta.
\end{gather*}
В обе части неравенства добавим слагаемое $\langle\widetilde{\nabla}^{m_{k+1}} f_\delta(y), x - y\rangle$:
\begin{flalign*}
f_\delta(x) - f_\delta(y) - \langle\widetilde{\nabla}^{m_{k+1}} f_\delta(y), x - y\rangle &\leq \langle\nabla f_\delta(y) - \widetilde{\nabla}^{m_{k+1}} f_\delta(y), x - y\rangle + \frac{L}{2}\norm{x - y}^2 + \delta&\\ &\leq \langle\nabla f_\delta(y) - \widetilde{\nabla}^{m_{k+1}} f_\delta(y), x - y\rangle + \frac{\widetilde{L}}{3}\norm{x - y}^2 + \delta.
\end{flalign*}
Воспользуемся неравенством Фенхеля \cite{devolder2013exactness} (формула (7.6))
\begin{flalign*}
f_\delta(x) - f_\delta(y) - \langle\widetilde{\nabla}^{m_{k+1}} f_\delta(y), x - y\rangle &\leq \frac{\widetilde{L}}{6}\norm{x - y}^2 + \frac{3}{\widetilde{L}}\norm{\widetilde{\nabla}^{m_{k+1}} f_\delta(y) - \nabla f_\delta(y)}^2_* \\&\hphantom{{}={}}+ \frac{\widetilde{L}}{3}\norm{x - y}^2 + \delta.
\end{flalign*}
То есть
\begin{flalign}
\label{ST_H1}
f_\delta(x) - f_\delta(y) - \langle\widetilde{\nabla}^{m_{k+1}} f_\delta(y), x - y\rangle &\leq \frac{\widetilde{L}}{2}\norm{x - y}^2 + \frac{3}{\widetilde{L}}\norm{\widetilde{\nabla}^{m_{k+1}} f_\delta(y) - \nabla f_\delta(y)}^2_* + \delta.
\end{flalign}
Оценим вероятность того, что
\begin{equation}
\begin{gathered}
\label{ST_H2}
f_\delta(x) - f_\delta(y) - \langle\widetilde{\nabla}^{m_{k+1}} f_\delta(y), x - y\rangle > (2\kappa + 4\Omega\sqrt{\kappa} + 2\Omega^2)\frac{3\sigma^2}{Lm_{k+1}} + \frac{\widetilde{L}}{2}\norm{x - y}^2 + \delta.
\end{gathered}
\end{equation}
Учитывая (\ref{ST_H1}), из (\ref{ST_H2}) будет следовать
\begin{gather}
\label{ST_H3}
\frac{3}{\widetilde{L}}\norm{\widetilde{\nabla}^{m_{k+1}} f_\delta(y) - \nabla f_\delta(y)}^2_* > (2\kappa + 4\Omega\sqrt{\kappa} + 2\Omega^2)\frac{3\sigma^2}{\widetilde{L}m_{k+1}},
\end{gather}
что эквивалентно
\begin{gather}
\label{ST_H4}
\norm{\widetilde{\nabla}^{m_{k+1}} f_\delta(y) - \nabla f_\delta(y)}^2_* > (2\kappa + 4\Omega\sqrt{\kappa} + 2\Omega^2)\frac{\sigma^2}{m_{k+1}}.
\end{gather}
Воспользуемся следующим фактом из \cite{juditsky2008large}, пусть $\gamma_1,\dots,\gamma_N$ -- случайные независимые вектора такие, что
\begin{gather*}
\mathbb{E}\left[\exp\Big(\frac{\norm{\gamma_i}_*^2}{\sigma^2}\Big)\right] \leq \exp(1),\,\,\,\,\mathbb{E}\gamma_i = 0,
\end{gather*}
тогда верно, $\forall \Omega \geq 0$:
\begin{gather*}
\mathbb{P}\Bigg(\norm{\sum_{i=1}^{N}\gamma_i}_* \geq (\sqrt{2\kappa} + \sqrt{2}\Omega)\sqrt{N}\sigma\Bigg) \leq \exp(-\Omega^2/3),
\end{gather*}
где $\kappa$ -- константа регулярности из \cite{juditsky2008large}.
Возьмем $\gamma_i = \nabla f_\delta(\widetilde{y};\xi_i) - \nabla f_\delta(\widetilde{y})$, где $\widetilde{y}$ -- неслучайный вектор, учтем (\ref{ST1}) и (\ref{ST2}), тогда
\begin{gather*}
\mathbb{P}\Bigg(\norm{\sum_{j=1}^{m_{k+1}}\Big(\nabla f_\delta(\widetilde{y};\xi_j) - \nabla f_\delta(\widetilde{y})\Big)}_* > (\sqrt{2\kappa} + \sqrt{2}\Omega)\sqrt{m_{k+1}}\sigma\Bigg) \leq \exp(-\Omega^2/3)\Leftrightarrow\\
\mathbb{P}\Bigg(\norm{\widetilde{\nabla}^{m_{k+1}} f_\delta(\widetilde{y}) - \nabla f_\delta(\widetilde{y})}_* > (\sqrt{2\kappa} + \sqrt{2}\Omega)\frac{\sigma}{\sqrt{m_{k+1}}}\Bigg) \leq \exp(-\Omega^2/3).
\end{gather*}
Получаем в конечном счете, что
\begin{align*}
&\hphantom{{}={}}\mathbb{P}\Bigg(\norm{\widetilde{\nabla}^{m_{k+1}} f_\delta(y) - \nabla f_\delta(y)}_* > (\sqrt{2\kappa} + \sqrt{2}\Omega)\frac{\sigma}{\sqrt{m_{k+1}}}\Bigg) \\ &=
\mathbb{E}\Bigg[\mathbb{P}\Bigg(\norm{\widetilde{\nabla}^{m_{k+1}} f_\delta(\widetilde{y}) - \nabla f_\delta(\widetilde{y})}_* > (\sqrt{2\kappa} + \sqrt{2}\Omega)\frac{\sigma}{\sqrt{m_{k+1}}}\Bigg|y = \widetilde{y}\Bigg)\Bigg] \\&\leq
\mathbb{E}\left[\exp(-\Omega^2/3)\right] \\&= \exp(-\Omega^2/3)
\end{align*}
и
\begin{gather*}
\mathbb{P}\Bigg(\frac{3}{\widetilde{L}}\norm{\widetilde{\nabla}^{m_{k+1}} f_\delta(y) - \nabla f_\delta(y)}_*^2 > (2\kappa + 4\Omega\sqrt{\kappa} + 2\Omega^2)\frac{3\sigma^2}{\widetilde{L}m_{k+1}}\Bigg) \leq \exp(-\Omega^2/3).
\end{gather*}
Из этого неравенства и из того, что из (\ref{ST_H2}) следует (\ref{ST_H3}), получаем утверждение леммы.
\end{proof}

Сделаем следующее обозначение: $\widetilde{\Omega} \myeq 2\kappa + 4\Omega\sqrt{\kappa} + 2\Omega^2$. 

Далее будет использоваться в алгоритме константу $L_0$, которая имеет смысл предположительной "локальной"\, константы Липшица градиента в точке $x_0$. Рассмотрим алгоритм оптимизации.

\begin{mdframed}

\begin{mdframed}
\centering
\bf{Адаптивный зеркальный вариант метода подобных треугольников со стохастическим $(\delta, L)$-оракулом}
\end{mdframed}

\textbf{Дано:} $x_0$ -- начальная точка, $\epsilon$ -- желаемая точность решения, $\delta$, $L$ -- константы из $(\delta, L)$-оракула, $\beta$ -- доверительный уровень, $L_0$ ($L_0 \leq L$).

\iftrue
Возьмем 
\begin{gather*}
N := \left\lceil\frac{2\sqrt{3}\sqrt{L}R_Q}{\sqrt{\epsilon}}\right\rceil, \,
\Omega := \sqrt{6\ln{\frac{N}{\beta}}}.
\end{gather*}

\textbf{0 - шаг:}
\begin{gather*}
y_0 := x_0,\,
u_0 := x_0,\,
L_1^0 := \frac{L_0}{2},\,
\alpha_0 := 0,\,
A_0 := \alpha_0,\,
j_1 := 0
\end{gather*}

\textbf{$\boldsymbol{k+1}$ - шаг:}
\begin{gather}
\alpha_{k+1} := \frac{1 + \sqrt{1 + 4A_kL_{k+1}^{j_{k+1}}}}{2L_{k+1}^{j_{k+1}}} \label{eqymir2DLSTA}\\
A_{k+1} := A_k + \alpha_{k+1}\\
y_{k+1} := \frac{\alpha_{k+1}u_k + A_k x_k}{A_{k+1}} \label{eqymir2DLST}\\
m_{k+1} := \left\lceil\frac{3\sigma^2\widetilde{\Omega}\alpha_{k+1}}{\epsilon}\right\rceil
\label{eqymir2DLSTMK}
\end{gather}
\begin{center}
Сгенерировать: $ \widetilde{\nabla}^{m_{k+1}} f_\delta(y_{k+1})$
\end{center}
\begin{equation}
\begin{gathered}
\phi_{k+1}(x) \myeq V(x, u_k) + \alpha_{k+1}\left(f_\delta(y_{k+1}) + \langle \widetilde{\nabla}^{m_{k+1}} f_\delta(y_{k+1}), x - y_{k+1} \rangle + h(x)\right)\\
u_{k+1} := \argmin_{x \in Q}\phi_{k+1}(x) \label{equmir2DLST}
\end{gathered}
\end{equation}
\begin{equation}
x_{k+1} := \frac{\alpha_{k+1}u_{k+1} + A_k x_k}{A_{k+1}} \label{eqxmir2DLST}
\end{equation}
Если выполнено условие
\begin{equation}
\begin{gathered}
f_\delta(x_{k+1}) \leq f_\delta(y_{k+1}) + \langle \widetilde{\nabla}^{m_{k+1}} f_\delta(y_{k+1}), x_{k+1} - y_{k+1} \rangle\ +\\  + \frac{L_{k+1}^{j_{k+1}}}{2}\norm{x_{k+1} - y_{k+1}}^2 + \frac{3\sigma^2\widetilde{\Omega}}{L_{k+1}^{j_{k+1}}m_{k+1}} + \delta,
\label{exitLDLST}
\end{gathered}
\end{equation}
то
\begin{gather}
L_{k+2}^0 := \frac{L_{k+1}^{j_{k+1}}}{2},\,\,\,L_{k+1} := L_{k+1}^{j_{k+1}},\,\,\,j_{k+2} := 0
\end{gather}
и перейти к следующему шагу, иначе
\begin{gather}
L_{k+1}^{j_{k+1} + 1} := 2L_{k+1}^{j_{k+1}},\,\,\,j_{k+1} := j_{k+1} + 1
\end{gather}
и повторить текущий шаг.
\end{mdframed}

\begin{lemma}
\leavevmode
\label{remark2ST}
Пусть \begin{gather*}\Omega = \sqrt{6\ln{\frac{N}{\beta}}},\end{gather*} тогда
 \begin{align*}
 \mathbb{P}\left(\bigcap_{k=0}^{N-1}\left\{L_{k+1} < 3L\right\}\right) \geq 1 - \beta.
 \end{align*}

\end{lemma}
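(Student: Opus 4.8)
The plan is to control the complementary probability $\mathbb{P}\left(\bigcup_{k=0}^{N-1}\{L_{k+1}\geq 3L\}\right)$ and show it does not exceed $\beta$. The structural fact that drives everything is that the adaptive rule resets $L_{k+2}^0 := L_{k+1}^{j_{k+1}}/2$, so that $L_{k+1}^0 = L_k/2$; consequently, whenever $L_k < 3L$ (and for $k=0$ directly, since $L_1^0 = L_0/2 \leq L/2$), the very first trial value already satisfies $L_{k+1}^0 < 3L/2$. Because the inner loop only ever doubles $L_{k+1}^{j}$, the unique trial value $\widetilde{L}^\star$ that first enters the window $[3L/2, 3L)$ is then well defined, and — this is the crucial alignment — at that value Lemma \ref{corST2} applies with $\widetilde{L} = \widetilde{L}^\star \geq 3L/2$, since the terms $\frac{3\sigma^2\widetilde{\Omega}}{\widetilde{L}^\star m_{k+1}}$ and $\frac{\widetilde{L}^\star}{2}\norm{x_{k+1}-y_{k+1}}^2$ appearing in the acceptance test (\ref{exitLDLST}) match exactly the right-hand side of the large-deviation bound.

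First I would record the deterministic implication: if $L_{k+1}^0 < 3L/2$, then $\{L_{k+1} \geq 3L\}$ forces the test (\ref{exitLDLST}) to have failed precisely at the trial of value $\widetilde{L}^\star \in [3L/2, 3L)$. Indeed, if the test held at $\widetilde{L}^\star$ or earlier, the loop would terminate with $L_{k+1} \leq \widetilde{L}^\star < 3L$; a final value $\geq 3L$ can arise only after a further doubling, i.e. after a failure at $\widetilde{L}^\star$. The mini-batch used at the $\widetilde{L}^\star$-trial is drawn with fresh randomness, while both $\widetilde{L}^\star$ and the associated $y_{k+1}$ are measurable with respect to the history $\mathcal{F}_k$ up to the end of step $k$; hence $y_{k+1}$ is independent of that mini-batch and the hypotheses of Lemma \ref{corST2} are met. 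Applying it conditionally on $\mathcal{F}_k$, on the event $\{L_k < 3L\}$, yields $\mathbb{P}\left(L_{k+1} \geq 3L \mid \mathcal{F}_k\right) \leq \exp(-\Omega^2/3)$.

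To pass from the one-step bound to the union over $k$, I would decompose by the first index at which the constant becomes large. Letting $T$ be the first $k+1$ with $L_{k+1} \geq 3L$, the event $\{T = k+1\}$ is contained in $\{L_1 < 3L, \dots, L_k < 3L\} \cap \{L_{k+1} \geq 3L\}$, and the prefix event is $\mathcal{F}_k$-measurable and forces $L_k < 3L$, so the conditional bound above applies on it. Taking expectations gives $\mathbb{P}(T = k+1) \leq \exp(-\Omega^2/3)$ for each $k$, whence $\mathbb{P}\left(\bigcup_{k=0}^{N-1}\{L_{k+1}\geq 3L\}\right) = \sum_{k=0}^{N-1}\mathbb{P}(T=k+1) \leq N\exp(-\Omega^2/3)$. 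Substituting $\Omega = \sqrt{6\ln(N/\beta)}$ gives $\exp(-\Omega^2/3) = (\beta/N)^2$, so the total is $\beta^2/N \leq \beta$, which is the assertion.

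The step I expect to be the main obstacle is the correct handling of the dependence between the adaptive constant and the stochastic gradient: one cannot feed the random final constant $L_{k+1}$ directly into Lemma \ref{corST2}, because $L_{k+1}$ is chosen using the very samples that enter the test. The first-failure-time decomposition, combined with the observation that $L_{k+1}^0 = L_k/2$ pins the critical trial value into $[3L/2, 3L)$ and that each trial is run with independent fresh sampling, is exactly what makes the conditional application of Lemma \ref{corST2} legitimate; the residual measure-theoretic care — coupling the event that the test would fail at $\widetilde{L}^\star$ to a freshly drawn batch even on histories where that trial is never actually reached — is routine but ought to be stated explicitly.
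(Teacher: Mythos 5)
Your proof is correct and rests on the same two pillars as the paper's argument: the reset $L_{k+2}^0 := L_{k+1}/2$ (together with $L_1^0 = L_0/2 \leq L/2$) guarantees that whenever the previous outer step ended below $3L$, the inner loop must pass through a trial value lying in $[3L/2,3L)$ before the accepted constant can reach $3L$, and at that trial Lemma~\ref{corST2} bounds the probability that the test (\ref{exitLDLST}) fails by $\exp(-\Omega^2/3)$. Where you genuinely differ is in the bookkeeping of the union bound. The paper bounds each marginal $\mathbb{P}\left(L_{k+1}\geq 3L\right)$ by $(k+1)\exp(-\Omega^2/3)$ via an induction that splits on $\{L_k\geq 3L\}$ versus $\{L_k<3L\}$, and then sums these cumulative bounds to obtain $N^2\exp(-\Omega^2/3)=\beta^2\leq\beta$. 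You instead decompose the union by the first index $T$ at which the constant exceeds $3L$; the events $\{T=k+1\}$ are disjoint and each has probability at most $\exp(-\Omega^2/3)$, giving the sharper total $N\exp(-\Omega^2/3)=\beta^2/N$. Both suffice for the prescribed $\Omega=\sqrt{6\ln(N/\beta)}$, but your decomposition avoids the double counting inherent in summing the paper's cumulative marginals and would in principle tolerate the smaller choice $\Omega=\sqrt{3\ln(N/\beta)}$ (hence smaller mini-batches, since $\widetilde{\Omega}$ enters $m_{k+1}$). Your explicit conditioning on the history $\mathcal{F}_k$, and your remark that the failure event at the critical trial must be coupled to a freshly drawn batch even on histories where that trial is not reached, address a measure-theoretic point that the paper passes over by simply writing a conditional probability given $L_{k+1}^{j_{k+1}-1}\geq 3L/2$; no step of your argument fails.
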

\begin{proof}

\begin{align*}
\mathbb{P}\left(\bigcup_{k=0}^{N-1}\left\{L_{k+1} \geq 3L\right\}\right) &\leq_{{\tiny \circled{1}}} \sum_{k = 0}^{N - 1}\mathbb{P}\left(L_{k+1} \geq 3L\right).
\end{align*}

{\small \circled{1}} -- из неравенства Бонферрони.

Обозначим за $A(k+1, j)$ -- событие, заключающиеся в выполнимости условия (\ref{exitLDLST}) на $k+1$ шаге на $j$-ом внутренним цикле. Рассмотрим для случая, когда $k = 0$. Из события \begin{align*}\left\{L_{1} \geq 3L\right\}\end{align*} следует событие \begin{align*}\left\{L_1^{j_1 - 1} \geq \frac{3L}{2} \cap \neg A(1,j_1 - 1)\right\}.\end{align*} 
Поэтому
\begin{align*}
\mathbb{P}\left(L_{1} \geq 3L\right)
&\leq \mathbb{P}\left(L_1^{j_1 - 1} \geq \frac{3L}{2} \cap \neg A(1,j_1 - 1)\right)\\
&\leq \mathbb{P}\left(\neg A(1,j_1 - 1)|L_1^{j_1 - 1} \geq \frac{3L}{2}\right)\\
&\leq_{{\tiny \circled{1}}} \exp(-\Omega^2/3).
\end{align*}
{\small \circled{1}} -- Лемма \ref{corST2}.

Воспользуемся методом математической индукции. Пусть для $k$ верно
\begin{align*}
\mathbb{P}\left(L_{k} \geq 3L\right) \leq k\exp(-\Omega^2/3),
\end{align*}

тогда
\begin{align*}
\mathbb{P}\left(L_{k+1} \geq 3L\right) &= \mathbb{P}\left(L_{k+1} \geq 3L \cap L_{k} \geq 3L\right) + \mathbb{P}\left(L_{k+1} \geq 3L \cap L_{k} < 3L\right)\\
&= \mathbb{P}\left(L_{k+1} \geq 3L |L_{k} \geq 3L\right)\mathbb{P}\left(L_{k} \geq 3L\right) + \mathbb{P}\left(L_{k+1} \geq 3L \cap L_{k} < 3L\right)\\
&\leq k\exp(-\Omega^2/3) + \mathbb{P}\left(L_{k+1} \geq 3L \cap L_{k} < 3L\right)\\
&\leq k\exp(-\Omega^2/3) + \mathbb{P}\left(L_{k+1} \geq 3L \cap L_{k+1}^0 < \frac{3L}{2}\right).
\end{align*}
Из события \begin{align*}\left\{L_{k+1} \geq 3L \cap L_{k+1}^0 < \frac{3L}{2}\right\}\end{align*} следует событие \begin{align*}\left\{L_{k+1}^{j_{k+1} - 1} \geq \frac{3L}{2} \cap \neg A(k+1,j_{k+1} - 1)\right\}.\end{align*}
То есть
\begin{align*}
\mathbb{P}\left(L_{k+1} \geq 3L\right) 
&\leq k\exp(-\Omega^2/3) + \mathbb{P}\left(L_{k+1}^{j_{k+1} - 1} \geq \frac{3L}{2} \cap \neg A(k+1,j_{k+1} - 1)\right)\\
&\leq k\exp(-\Omega^2/3) + \mathbb{P}\left(\neg A(k+1,j_{k+1} - 1) | L_{k+1}^{j_{k+1} - 1} \geq \frac{3L}{2}\right)\\
&\leq_{{\tiny \circled{1}}} (k+1)\exp(-\Omega^2/3).
\end{align*}
{\small \circled{1}} -- Лемма \ref{corST2}.

В конечном счете получаем, что
\begin{align*}
\mathbb{P}\left(\bigcup_{k=0}^{N-1}\left\{L_{k+1} \geq 3L\right\}\right) 
&\leq \sum_{k = 0}^{N - 1}\mathbb{P}\left(L_{k+1} \geq 3L\right)\\
&\leq \sum_{k = 0}^{N - 1}(k+1)\exp(-\Omega^2/3)\\
&\leq N^2\exp(-\Omega^2/3).
\end{align*}

Так как по условию леммы $\Omega = \sqrt{6\ln{\frac{N}{\beta}}}$, то
\begin{align*}
\mathbb{P}\left(\bigcup_{k=0}^{N-1}\left\{L_{k+1} \geq 3L\right\}\right) &\leq N^2\exp(-\Omega^2/3)\\ &\leq \beta.
\end{align*}
Последнее неравенство завершает доказательство леммы.
\end{proof}

\begin{lemma}
	\label{lemma_maxmin_DLST1}
	Пусть для последовательности $\alpha_k$ выполнено
	\begin{align*}
	\alpha_0 = 0,\,
	A_k = \sum_{i = 0}^{k}\alpha_i,\,
	\alpha_{k+1} = \frac{1 + \sqrt{1 + 4A_kL_{k+1}}}{2L_{k+1}},
	\end{align*}
	где $\{L_k\}$ -- последовательность, генерируемая алгоритмом.
	
	Тогда c вероятностью $1 - \beta$
	 \begin{align}
	 \label{lemma_maxmin_DLST1_1}
	 A_k \geq \frac{(k+1)^2}{12L},\,\,\,\forall k = 1,\dots,N.
	 \end{align}
\end{lemma}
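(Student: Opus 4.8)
The plan is to reduce the statement to a deterministic recursion that holds on the high-probability event supplied by Lemma \ref{remark2ST}, and then telescope. First I would square the defining relation for $\alpha_{k+1}$. Writing $2L_{k+1}\alpha_{k+1} - 1 = \sqrt{1 + 4A_k L_{k+1}}$ and squaring yields, after cancellation, the key identity
\begin{align*}
A_{k+1} = A_k + \alpha_{k+1} = L_{k+1}\alpha_{k+1}^2,
\end{align*}
which is the usual quadratic growth relation of the similar-triangles scheme. In particular $\alpha_{k+1} = \sqrt{A_{k+1}/L_{k+1}}$, and since every $\alpha_i \geq 0$ the sequence $A_k$ is nondecreasing.

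Next I would convert this into an additive lower bound on $\sqrt{A_k}$. From $A_{k+1} - A_k = \alpha_{k+1} = \sqrt{A_{k+1}/L_{k+1}}$ together with $\sqrt{A_k} \leq \sqrt{A_{k+1}}$ I get
\begin{align*}
\sqrt{A_{k+1}} - \sqrt{A_k} = \frac{A_{k+1} - A_k}{\sqrt{A_{k+1}} + \sqrt{A_k}} \geq \frac{\sqrt{A_{k+1}/L_{k+1}}}{2\sqrt{A_{k+1}}} = \frac{1}{2\sqrt{L_{k+1}}}.
\end{align*}
For the base case, $A_0 = 0$ forces $\alpha_1 = 1/L_1$, so $A_1 = 1/L_1$ and $\sqrt{A_1} = 1/\sqrt{L_1}$.

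Now I bring in the randomness. By Lemma \ref{remark2ST} the event $E = \bigcap_{k=0}^{N-1}\left\{L_{k+1} < 3L\right\}$ has probability at least $1 - \beta$. On $E$ every factor $\sqrt{L_{k+1}}$ is bounded by $\sqrt{3L}$, hence $\sqrt{A_{k+1}} - \sqrt{A_k} > 1/(2\sqrt{3L})$ and $\sqrt{A_1} > 1/\sqrt{3L}$. Telescoping from $1$ to $k$ gives
\begin{align*}
\sqrt{A_k} > \frac{1}{\sqrt{3L}} + (k-1)\frac{1}{2\sqrt{3L}} = \frac{k+1}{2\sqrt{3L}},
\end{align*}
so that $A_k > (k+1)^2/(12L)$ for all $k = 1,\dots,N$ on $E$, which is exactly inequality (\ref{lemma_maxmin_DLST1_1}).

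The only genuinely delicate point is the probabilistic bookkeeping: the recursion is deterministic, but the coefficients $L_{k+1}$ are random, so the clean growth estimate is valid only on $E$. The main thing to get right is therefore to route all the algebra through $E$ and to invoke Lemma \ref{remark2ST} once, uniformly over all $k$, rather than attempting to control each $L_{k+1}$ separately inside the telescoping. Everything else is the elementary manipulation of the quadratic relation $A_{k+1} = L_{k+1}\alpha_{k+1}^2$.
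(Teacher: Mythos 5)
Your proof is correct and follows essentially the same route as the paper: both condition on the event $\bigcap_{k=0}^{N-1}\{L_{k+1} < 3L\}$ supplied by Lemma \ref{remark2ST} and then establish the quadratic growth of $A_k$ by a purely deterministic recursion. The only difference is cosmetic --- the paper runs a direct induction on $A_k \geq (k+1)^2/(12L)$ using the bound $\alpha_{k+1} \geq \frac{1}{2L_{k+1}} + \sqrt{A_k/L_{k+1}}$, whereas you telescope the increments of $\sqrt{A_k}$ via the identity $A_{k+1} = L_{k+1}\alpha_{k+1}^2$; both give the same estimate.
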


\begin{proof}
Воспользуемся Леммой \ref{remark2ST}, которая говорит следующее: с вероятностью больше или равной $1 - \beta$, $\forall k \leq N$, $L_k < 3L$.
 
Пусть $k = 1$, тогда
	\begin{equation*}
	A_1 = \alpha_1 = \frac{1}{L_1} \geq \frac{1}{3L}.
	\end{equation*}
	По индукции, пусть неравенство (\ref{lemma_maxmin_DLST1_1}) верно для $k$, тогда:
	\begin{align*}
	\alpha_{k+1} &= \frac{1}{2L_{k+1}} + \sqrt{\frac{1}{4L_{k+1}^2} + \frac{A_{k}}{L_{k+1}}} \\&\geq 
	\frac{1}{2L_{k+1}} + \sqrt{\frac{A_{k}}{L_{k+1}}} \\&\geq
	\frac{1}{6L} + \frac{1}{\sqrt{3L}}\frac{k+1}{2\sqrt{3L}} \\&=
	\frac{k+2}{6L}
	\end{align*}
Последнее неравенство следует из индукционного предположения. В конечном счете получаем, что
	\begin{equation*}
	\alpha_{k+1} \geq \frac{k+2}{6L}
	\end{equation*}
и
	\begin{align*}
	A_{k+1} &= A_k + \alpha_{k+1} \\&= \frac{(k+1)^2}{12L} + \frac{k+2}{6L} \\&\geq \frac{(k+2)^2}{12L}.
	\end{align*}
\end{proof}

\begin{remark}
Как и в \cite{gasnikov2018universal,nesterov2015universal}, можно получить, что с вероятностью $1 - \beta$ в среднем на каждой внешней итерации мы будем считать значение всех функций 4 раза, а стохастического градиента $\widetilde{\nabla}^{m_{k+1}} f_\delta(y_{k+1})$ -- 2 раза. 
\end{remark}

Докажем важную лемму.

\begin{lemma}
	Пусть $\psi(x)$ выпуклая функция и 
	\begin{gather*}
	y = {\argmin_{x \in Q}}\{\psi(x) + V(x,z)\}.
	\end{gather*}
	Тогда

	\begin{equation*}
	\psi(x) + V(x,z) \geq \psi(y) + V(y,z) + V(x,y) ,\,\,\, \forall x \in Q.
	\end{equation*}
	\label{lemma_maxmin_2}
\end{lemma}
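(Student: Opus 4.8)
Мой план: доказать это стандартное неравенство прокс-оператора («неравенство трёх точек») через условие оптимальности первого порядка для точки $y$. Сначала я бы заметил, что $V(\cdot,z)$ дифференцируема по первому аргументу на $\textnormal{int }Q$ (так как прокс-функция $d$ непрерывно дифференцируема), причём $\nabla_x V(x,z) = \nabla d(x) - \nabla d(z)$. Поскольку $\psi$ выпукла, она субдифференцируема, и субдифференциал суммы $\psi(\cdot) + V(\cdot,z)$ в точке $y$ есть $\partial\psi(y) + (\nabla d(y) - \nabla d(z))$. Из того, что $y$ доставляет минимум этой суммы на выпуклом $Q$, по условию оптимальности первого порядка найдётся субградиент $p \in \partial\psi(y)$ такой, что
\begin{gather*}
\langle p + \nabla d(y) - \nabla d(z), x - y\rangle \geq 0, \quad \forall x \in Q.
\end{gather*}

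Далее я бы воспользовался двумя элементарными фактами. Во-первых, выпуклостью $\psi$: для того же субградиента $p$ выполнено $\psi(x) \geq \psi(y) + \langle p, x - y\rangle$ при всех $x \in Q$. Во-вторых, тождеством трёх точек для дивергенции Брэгмана, которое проверяется прямой подстановкой определения $V(x,y) = d(x) - d(y) - \langle\nabla d(y), x-y\rangle$ и сокращением слагаемых с $d$:
\begin{gather*}
V(x,z) = V(x,y) + V(y,z) + \langle \nabla d(y) - \nabla d(z), x - y\rangle, \quad \forall x \in Q.
\end{gather*}

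Остаётся сложить всё вместе. Подставляя оценку выпуклости для $\psi(x)$ и тождество трёх точек для $V(x,z)$ в левую часть утверждения, я бы получил
\begin{gather*}
\psi(x) + V(x,z) \geq \psi(y) + V(y,z) + V(x,y) + \langle p + \nabla d(y) - \nabla d(z), x - y\rangle,
\end{gather*}
после чего последнее скалярное произведение отбрасывается как неотрицательное в силу условия оптимальности, что и завершает доказательство. Основным (и, по сути, единственным нетривиальным) местом я считаю аккуратную запись условия оптимальности при возможно недифференцируемой $\psi$: нужно корректно сослаться на субдифференциальное исчисление суммы гладкой и негладкой частей и на характеризацию минимума выпуклой функции на выпуклом множестве через вариационное неравенство. Вывод тождества трёх точек и финальная комбинация — рутинная алгебра.
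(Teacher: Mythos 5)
Ваше доказательство корректно и по существу совпадает с доказательством из статьи: та же запись условия оптимальности через субградиент $g \in \partial\psi(y)$ и вариационное неравенство, та же оценка из выпуклости $\psi$ и то же тождество трёх точек $\langle \nabla d(y) - \nabla d(z), y - x\rangle = V(y,z) + V(x,y) - V(x,z)$, проверяемое прямой подстановкой определения $V$. Расхождений с авторским рассуждением нет.
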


\begin{proof}
	
	По критерию оптимальности:
	\begin{gather*}
		\exists g \in \partial\psi(y), \,\,\, \langle g + \nabla_y V(y, z), x - y  \rangle \geq 0 ,\,\,\, \forall x \in Q.
	\end{gather*}
	Тогда неравенство
	\begin{gather*}
		\psi(x) - \psi(y) \geq \langle g, x - y  \rangle \geq \langle \nabla_y V(y, z), y - x  \rangle
	\end{gather*}
и равенство
	\begin{gather*}
	\langle \nabla_y V(y, z), y - x  \rangle = \langle \nabla d(y) - \nabla d(z), y - x  \rangle = d(y) - d(z) - \langle \nabla d(z), y - z  \rangle +\\ + d(x) - d(y) - \langle \nabla d(y), x - y  \rangle - d(x) + d(z) + \langle \nabla d(z), x - z  \rangle = \\=
	V(y,z) + V(x,y) - V(x,z)
	\end{gather*}
завершают доказательство.
	
\end{proof}

Введем обозначение: $l_f^\delta(x;y) = f_\delta(y) + \langle \widetilde{\nabla}^{m_{k+1}} f_\delta(y), x - y \rangle$.
\begin{lemma}
	С вероятностью больше или равной $1 - \beta$, $\forall x \in Q$, $\forall k \in [0,N]$,
	\begin{align*}
		&\hphantom{{}={}}l_f^\delta(x_{k+1};y_{k+1})  + \frac{L_{k+1}}{2}\norm{x_{k+1} - y_{k+1}}^2 + h(x_{k+1}) \\&\leq \frac{A_k}{A_{k+1}}\left(l_f^\delta(x_k;y_{k+1}) + h(x_k)\right) +
			 \frac{\alpha_{k+1}}{A_{k+1}}\Bigl(l_f^\delta(x;y_{k+1}) \\&\hphantom{{}={}}+ h(x)
			 + \frac{1}{\alpha_{k+1}}V(x, u_k) - \frac{1}{\alpha_{k+1}}V(x, u_{k+1})\Bigl)
	\end{align*}
	\label{lemma_maxmin_3DLST}
\end{lemma}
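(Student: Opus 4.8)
The plan is to prove the inequality pathwise on the high-probability event $\mathcal{E} = \bigcap_{k=0}^{N-1}\{L_{k+1} < 3L\}$, which by Lemma \ref{remark2ST} has probability at least $1-\beta$ and on which every inner loop terminates, so that $L_{k+1}$, $\alpha_{k+1}$, $u_{k+1}$ and $x_{k+1}$ are all well defined for $k \in [0,N]$. On $\mathcal{E}$ the asserted bound is a deterministic consequence of the update rules, so the randomness enters only through this conditioning. The two structural facts I would exploit are that $l_f^\delta(\cdot\,;y_{k+1})$ is an affine function of its first argument, and that $x_{k+1}$ and $y_{k+1}$ are the same convex combination $\tfrac{\alpha_{k+1}}{A_{k+1}}(\cdot) + \tfrac{A_k}{A_{k+1}}x_k$ of the mirror point and $x_k$.

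First I would split the left-hand side. Since $x_{k+1} = \tfrac{\alpha_{k+1}}{A_{k+1}}u_{k+1} + \tfrac{A_k}{A_{k+1}}x_k$ by (\ref{eqxmir2DLST}), affinity of $l_f^\delta(\cdot\,;y_{k+1})$ gives an exact decomposition of $l_f^\delta(x_{k+1};y_{k+1})$, and convexity of $h$ gives $h(x_{k+1}) \leq \tfrac{\alpha_{k+1}}{A_{k+1}}h(u_{k+1}) + \tfrac{A_k}{A_{k+1}}h(x_k)$. This already produces the $\tfrac{A_k}{A_{k+1}}\big(l_f^\delta(x_k;y_{k+1}) + h(x_k)\big)$ term on the right and reduces the task to controlling the $u_{k+1}$ contribution together with the quadratic term.

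Next I would convert the quadratic term into a Bregman term. Subtracting (\ref{eqymir2DLST}) from (\ref{eqxmir2DLST}) yields $x_{k+1} - y_{k+1} = \tfrac{\alpha_{k+1}}{A_{k+1}}(u_{k+1} - u_k)$, hence $\norm{x_{k+1}-y_{k+1}}^2 = \tfrac{\alpha_{k+1}^2}{A_{k+1}^2}\norm{u_{k+1}-u_k}^2$. The crucial identity is that squaring the definition (\ref{eqymir2DLSTA}) of $\alpha_{k+1}$ gives $L_{k+1}\alpha_{k+1}^2 = A_k + \alpha_{k+1} = A_{k+1}$; combined with the strong-convexity bound (\ref{bregmanIneq}), namely $V(u_{k+1},u_k)\ge \tfrac12\norm{u_{k+1}-u_k}^2$, this collapses the quadratic term exactly to $\tfrac{L_{k+1}}{2}\norm{x_{k+1}-y_{k+1}}^2 \le \tfrac{1}{A_{k+1}}V(u_{k+1},u_k)$.

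Finally I would apply Lemma \ref{lemma_maxmin_2} with $\psi(x) = \alpha_{k+1}\big(l_f^\delta(x;y_{k+1}) + h(x)\big)$, which is convex since $l_f^\delta$ is affine and $h$ convex, and $z = u_k$; by (\ref{equmir2DLST}) its minimizer is $u_{k+1}$, so $\alpha_{k+1}\big(l_f^\delta(u_{k+1};y_{k+1}) + h(u_{k+1})\big) \le \alpha_{k+1}\big(l_f^\delta(x;y_{k+1}) + h(x)\big) + V(x,u_k) - V(x,u_{k+1}) - V(u_{k+1},u_k)$. Multiplying the $u_{k+1}$-decomposition by $\tfrac{1}{A_{k+1}}$ and substituting this bound, the $-\tfrac{1}{A_{k+1}}V(u_{k+1},u_k)$ it carries cancels against the $+\tfrac{1}{A_{k+1}}V(u_{k+1},u_k)$ left over from the quadratic step, and rewriting $\tfrac{1}{A_{k+1}} = \tfrac{\alpha_{k+1}}{A_{k+1}}\cdot\tfrac{1}{\alpha_{k+1}}$ produces exactly the claimed right-hand side. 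The main obstacle, and the only non-mechanical point, is recognizing and verifying the identity $L_{k+1}\alpha_{k+1}^2 = A_{k+1}$ and arranging the two steps so that the spurious $V(u_{k+1},u_k)$ terms cancel with matching coefficient $\tfrac{1}{A_{k+1}}$; the factor $\tfrac12$ in (\ref{bregmanIneq}) must line up precisely for this cancellation, and it does.
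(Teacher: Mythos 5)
Your proposal is correct and follows essentially the same route as the paper's own proof: the same affine/convex splitting of $l_f^\delta(x_{k+1};y_{k+1})+h(x_{k+1})$ via (\ref{eqxmir2DLST}), the same identity $L_{k+1}\alpha_{k+1}^2=A_{k+1}$ from (\ref{eqymir2DLSTA}) combined with (\ref{bregmanIneq}), and the same application of Lemma \ref{lemma_maxmin_2} with $\psi(x)=\alpha_{k+1}\bigl(l_f^\delta(x;y_{k+1})+h(x)\bigr)$. Your explicit remark that the bound is deterministic on the event of Lemma \ref{remark2ST} is a small clarification the paper leaves implicit, but it does not change the argument.
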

\begin{proof}
	\begin{align*}
	&\hphantom{{}={}}l_f^\delta(x_{k+1};y_{k+1})  + \frac{L_{k+1}}{2}\norm{x_{k+1} - y_{k+1}}^2 + h(x_{k+1}) \\
	&\eqarg{(\ref{eqxmir2DLST})}l_f^\delta\left(\frac{\alpha_{k+1}u_{k+1} + A_k x_k}{A_{k+1}};y_{k+1}\right)  + \frac{L_{k+1}}{2}\norm{\frac{\alpha_{k+1}u_{k+1} + A_k x_k}{A_{k+1}} - y_{k+1}}^2\\
	&\hphantom{{}={}}+ h\left(\frac{\alpha_{k+1}u_{k+1} + A_k x_k}{A_{k+1}}\right)\\
	&\leqarg{(\ref{eqymir2DLST})} f_\delta(y_{k+1}) + \frac{\alpha_{k+1}}{A_{k+1}}\langle \widetilde{\nabla}^{m_{k+1}} f_\delta(y_{k+1}), u_{k+1} - y_{k+1} \rangle\ \\
	&\hphantom{{}={}}+\frac{A_k}{A_{k+1}}\langle \widetilde{\nabla}^{m_{k+1}} f_\delta(y_{k+1}), x_k - y_{k+1} \rangle  + \frac{L_{k+1} \alpha^2_{k+1}}{2 A^2_{k+1}}\norm{u_{k+1} - u_k}^2 \\
	 &\hphantom{{}={}}+ \frac{\alpha_{k+1}}{A_{k+1}}h(u_{k+1}) + \frac{A_k}{A_{k+1}}h(x_k)\\&=
	 \frac{A_k}{A_{k+1}}\left(f_\delta(y_{k+1}) + \langle \widetilde{\nabla}^{m_{k+1}} f_\delta(y_{k+1}), x_k - y_{k+1} \rangle + h(x_k)\right)
	 \\&\hphantom{{}={}}+
	 \frac{\alpha_{k+1}}{A_{k+1}}\left(f_\delta(y_{k+1}) + 
	 \langle \widetilde{\nabla}^{m_{k+1}} f_\delta(y_{k+1}), u_{k+1} - y_{k+1} \rangle + h(u_{k+1})\right)\\
	 &\hphantom{{}={}}+\frac{L_{k+1} \alpha^2_{k+1}}{2 A^2_{k+1}}\norm{u_{k+1} - u_k}^2\\ &=_{{\tiny \circled{1}}}
	 \frac{A_k}{A_{k+1}}\left(l_f^\delta(x_k;y_{k+1}) + h(x_k)\right)\\
	 &\hphantom{{}={}}+\frac{\alpha_{k+1}}{A_{k+1}}\left(l_f^\delta(u_{k+1};y_{k+1})
	 + \frac{1}{2 \alpha_{k+1}}\norm{u_{k+1} - u_k}^2 + h(u_{k+1})\right) \\&\leqarg{(\ref{bregmanIneq})}
	 \frac{A_k}{A_{k+1}}\left(l_f^\delta(x_k;y_{k+1}) + h(x_k)\right)\\
	 &\hphantom{{}={}}+\frac{\alpha_{k+1}}{A_{k+1}}\left(l_f^\delta(u_{k+1};y_{k+1})
	 + \frac{1}{\alpha_{k+1}}V(u_{k+1}, u_k) + h(u_{k+1})\right) \\&\leq_{{\tiny \circled{2}}}
	 \frac{A_k}{A_{k+1}}\left(l_f^\delta(x_k;y_{k+1}) + h(x_k)\right) \\
	 &\hphantom{{}={}}+
	 \frac{\alpha_{k+1}}{A_{k+1}}\left(l_f^\delta(x;y_{k+1}) + h(x)
	 + \frac{1}{\alpha_{k+1}}V(x, u_k) - \frac{1}{\alpha_{k+1}}V(x, u_{k+1})\right).
	\end{align*}
\end{proof}

{\small \circled{1}} -- из $A_{k+1} = L_{k+1}\alpha^2_{k+1}$, это следует из (\ref{eqymir2DLSTA}).

{\small \circled{2}} -- из Леммы \ref{lemma_maxmin_2} с 
$\psi(x) = \alpha_{k+1}\left(f_\delta(y_{k+1}) + 
\langle \widetilde{\nabla}^{m_{k+1}} f_\delta(y_{k+1}), x - y_{k+1} \rangle + h(x)\right)$.

\begin{lemma}
	С вероятностью больше или равной $1 - \beta$, $\forall x \in Q$, $\forall k \in [0,N]$,
	\begin{align*}
		&\hphantom{{}={}}A_{k+1} F(x_{k+1}) - A_{k} F(x_{k}) + V(x, u_{k+1}) - V(x, u_{k}) \\&\leq \alpha_{k+1}F(x) + 2\delta A_{k+1} + \frac{3\sigma^2\widetilde{\Omega}}{L_{k+1}m_{k+1}}A_{k+1}+ \alpha_{k+1}\langle \widetilde{\nabla}^{m_{k+1}} f_\delta(y_{k+1})-\nabla f_\delta(y_{k+1}), x - u_k \rangle.
	\end{align*}
	\label{lemma_maxmin_3DLST_2}
\end{lemma}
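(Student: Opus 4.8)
The plan is to obtain this lemma directly from Lemma \ref{lemma_maxmin_3DLST} by clearing the denominators $A_{k+1}$ and then converting the three linear-model quantities $l_f^\delta(\cdot\,;y_{k+1})$ back into genuine values of $F$. Every manipulation after Lemma \ref{lemma_maxmin_3DLST} is deterministic, so the conclusion holds on the very same event of probability $\geq 1-\beta$. Concretely, I would first multiply the inequality of Lemma \ref{lemma_maxmin_3DLST} by $A_{k+1}$ to get
\[
A_{k+1}\Bigl(l_f^\delta(x_{k+1};y_{k+1}) + \tfrac{L_{k+1}}{2}\norm{x_{k+1}-y_{k+1}}^2 + h(x_{k+1})\Bigr) \leq A_k\bigl(l_f^\delta(x_k;y_{k+1})+h(x_k)\bigr) + \alpha_{k+1}\bigl(l_f^\delta(x;y_{k+1})+h(x)\bigr) + V(x,u_k) - V(x,u_{k+1}).
\]

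Next I would bound the left-hand side from below. Since the algorithm leaves the inner loop exactly when (\ref{exitLDLST}) is satisfied with $L_{k+1}=L_{k+1}^{j_{k+1}}$, that inequality yields $l_f^\delta(x_{k+1};y_{k+1}) + \tfrac{L_{k+1}}{2}\norm{x_{k+1}-y_{k+1}}^2 \geq f_\delta(x_{k+1}) - \tfrac{3\sigma^2\widetilde{\Omega}}{L_{k+1}m_{k+1}} - \delta$; combining this with $f_\delta(x_{k+1}) \geq f(x_{k+1})-\delta$ from (\ref{exitLDLOrig2}) and restoring $h(x_{k+1})$ turns the left-hand side into the lower bound $A_{k+1}\bigl(F(x_{k+1}) - \tfrac{3\sigma^2\widetilde{\Omega}}{L_{k+1}m_{k+1}} - 2\delta\bigr)$, which already manufactures the two error terms $2\delta A_{k+1}$ and $\tfrac{3\sigma^2\widetilde{\Omega}}{L_{k+1}m_{k+1}}A_{k+1}$.

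For the right-hand side I would insert and subtract the true gradient. Writing $\eta \myeq \widetilde{\nabla}^{m_{k+1}} f_\delta(y_{k+1}) - \nabla f_\delta(y_{k+1})$, each model splits as $l_f^\delta(z;y_{k+1}) = f_\delta(y_{k+1}) + \langle\nabla f_\delta(y_{k+1}),z-y_{k+1}\rangle + \langle\eta,z-y_{k+1}\rangle$, and the left inequality of (\ref{exitLDLSTOrig}) bounds the first two summands by $f(z)$. Hence $l_f^\delta(x_k;y_{k+1})+h(x_k)\leq F(x_k)+\langle\eta,x_k-y_{k+1}\rangle$ and $l_f^\delta(x;y_{k+1})+h(x)\leq F(x)+\langle\eta,x-y_{k+1}\rangle$, so that $F(x_k)$ and $F(x)$ appear with the correct weights $A_k$ and $\alpha_{k+1}$.

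The single structural step — and the part I expect to be the crux — is the cancellation of the two residual inner products. After the substitutions the noise contribution is $A_k\langle\eta,x_k-y_{k+1}\rangle+\alpha_{k+1}\langle\eta,x-y_{k+1}\rangle = \langle\eta,\,A_kx_k+\alpha_{k+1}x-A_{k+1}y_{k+1}\rangle$, and the defining relation (\ref{eqymir2DLST}), namely $A_{k+1}y_{k+1}=\alpha_{k+1}u_k+A_kx_k$, collapses the bracket to $\alpha_{k+1}(x-u_k)$, producing precisely $\alpha_{k+1}\langle\eta,x-u_k\rangle$. Substituting all of the above into the multiplied inequality and moving $A_{k+1}F(x_{k+1})$, $A_kF(x_k)$ and the two Bregman terms to the appropriate sides yields the claimed estimate verbatim.
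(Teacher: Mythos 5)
Your proof is correct and follows essentially the same route as the paper's: both use the exit condition (\ref{exitLDLST}) together with (\ref{exitLDLOrig2}) to produce the $2\delta A_{k+1}$ and $\tfrac{3\sigma^2\widetilde{\Omega}}{L_{k+1}m_{k+1}}A_{k+1}$ terms, the left inequality of (\ref{exitLDLSTOrig}) to replace the linear models by $F(x_k)$ and $F(x)$, and the identity $A_{k+1}y_{k+1}=\alpha_{k+1}u_k+A_kx_k$ to collapse the two noise inner products into $\alpha_{k+1}\langle\widetilde{\nabla}^{m_{k+1}} f_\delta(y_{k+1})-\nabla f_\delta(y_{k+1}),x-u_k\rangle$. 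The only difference is cosmetic — you multiply through by $A_{k+1}$ at the start rather than at the end.
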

\begin{proof}
\begin{align*}
F(x_{k+1}) &\leqarg{(\ref{exitLDLST}),(\ref{exitLDLOrig2})} l_{f_\delta}(x_{k+1};y_{k+1})  + \frac{L_{k+1}}{2}\norm{x_{k+1} - y_{k+1}}^2 + h(x_{k+1}) + \frac{3\sigma^2\widetilde{\Omega}}{L_{k+1}m_{k+1}} + 2\delta \\
&\leqarg{Лемма \ref{lemma_maxmin_3DLST}}\frac{A_k}{A_{k+1}}\left(l_{f_\delta}(x_k;y_{k+1}) + h(x_k)\right) + \frac{\alpha_{k+1}}{A_{k+1}}\Bigl(l_{f_\delta}(x;y_{k+1}) + h(x)
	 \\&\hphantom{{}={}}\,\,\,\,\,+ \frac{1}{\alpha_{k+1}}V(x, u_k) - \frac{1}{\alpha_{k+1}}V(x, u_{k+1})\Bigl) + \frac{3\sigma^2\widetilde{\Omega}}{L_{k+1}m_{k+1}} + 2\delta.
\end{align*}
Далее получаем, что
\begin{align*}
 F(x_{k+1}) &\leq \frac{A_k}{A_{k+1}}\left(f_\delta(y_{k+1}) + \langle \widetilde{\nabla}^{m_{k+1}} f_\delta(y_{k+1}), x_k - y_{k+1} \rangle + h(x_k)\right) \\&\hphantom{{}={}}+
\frac{\alpha_{k+1}}{A_{k+1}}\Bigl(f_\delta(y_{k+1}) + \langle \widetilde{\nabla}^{m_{k+1}} f_\delta(y_{k+1}), x - y_{k+1} \rangle + h(x)
	 \\&\hphantom{{}={}}+ \frac{1}{\alpha_{k+1}}V(x, u_k) - \frac{1}{\alpha_{k+1}}V(x, u_{k+1})\Bigl) + \frac{3\sigma^2\widetilde{\Omega}}{L_{k+1}m_{k+1}} + 2\delta\\&= \frac{A_k}{A_{k+1}}\Bigl(f_\delta(y_{k+1}) + \langle \nabla f_\delta(y_{k+1}), x_k - y_{k+1} \rangle+ h(x_k) \\&\hphantom{{}={}}+ \langle \widetilde{\nabla}^{m_{k+1}} f_\delta(y_{k+1})-\nabla f_\delta(y_{k+1}), x_k - y_{k+1} \rangle\Bigl) \\&\hphantom{{}={}}+
\frac{\alpha_{k+1}}{A_{k+1}}\Bigl(f_\delta(y_{k+1}) + \langle \nabla f_\delta(y_{k+1}), x - y_{k+1} \rangle + h(x)\\&\hphantom{{}={}}+ \langle \widetilde{\nabla}^{m_{k+1}} f_\delta(y_{k+1})-\nabla f_\delta(y_{k+1}), x - y_{k+1} \rangle
	 \\&\hphantom{{}={}}+ \frac{1}{\alpha_{k+1}}V(x, u_k) - \frac{1}{\alpha_{k+1}}V(x, u_{k+1})\Bigl) + \frac{3\sigma^2\widetilde{\Omega}}{L_{k+1}m_{k+1}} + 2\delta \\&\leq_{{\tiny \circled{1}}}
	 \frac{A_k}{A_{k+1}}F(x_k) + \frac{\alpha_{k+1}}{A_{k+1}}\Bigl(F(x) + \frac{1}{\alpha_{k+1}}V(x, u_k) - \frac{1}{\alpha_{k+1}}V(x, u_{k+1})\Bigl) \\&\hphantom{{}={}}+ \frac{3\sigma^2\widetilde{\Omega}}{L_{k+1}m_{k+1}} + 2\delta +\frac{\alpha_{k+1}}{A_{k+1}}\Bigl(\langle \widetilde{\nabla}^{m_{k+1}} f_\delta(y_{k+1})-\nabla f_\delta(y_{k+1}), x - y_{k+1} \rangle\Bigl) \\&\hphantom{{}={}}+ \frac{\alpha_{k+1}}{A_{k+1}}\langle \widetilde{\nabla}^{m_{k+1}} f_\delta(y_{k+1})-\nabla f_\delta(y_{k+1}), y_{k+1} - u_k \rangle
	 \\&=
	 	 \frac{A_k}{A_{k+1}}F(x_k) + \frac{\alpha_{k+1}}{A_{k+1}}\left(F(x) + \frac{1}{\alpha_{k+1}}V(x, u_k) - \frac{1}{\alpha_{k+1}}V(x, u_{k+1})\right) \\&\hphantom{{}={}}+ \frac{3\sigma^2\widetilde{\Omega}}{L_{k+1}m_{k+1}} + 2\delta +\frac{\alpha_{k+1}}{A_{k+1}}\left(\langle \widetilde{\nabla}^{m_{k+1}} f_\delta(y_{k+1})-\nabla f_\delta(y_{k+1}), x - u_{k} \rangle\right)
\end{align*}

{\small \circled{1}} -- из правой части (\ref{exitLDLOrig2}), Следствия \ref{exitLDLOrig2} и $A_{k}(y_{k+1} - x_k) = \alpha_{k+1} (u_k - y_{k+1})$ из (\ref{eqymir2DLST}).

\end{proof}

Нам будет полезен факт из \cite{lan2012validation, devolder2013exactness}.
\begin{lemma}
\label{lemmaDev}
Пусть $\gamma_1$,...,$\gamma_k$ -- i.i.d случайные величины, $\Gamma_k$ и $\nu_k$ -- неслучайные функции от $\gamma_i$, и $c_i$ -- неслучайные константы, для которых верно следующее
\begin{gather*}
\mathbb{E}\left[\Gamma_i|\gamma_1,\dots,\gamma_{i-1}\right] = 0,\\
\abs{\Gamma_i} \leq c_i \nu_i,\\
\mathbb{E}\left[\exp\Bigg(\frac{\nu_i^2}{\sigma^2}\Bigg)\Bigg|\gamma_1,\dots,\gamma_{i-1}\right] \leq \exp(1).
\end{gather*}
Тогда
\begin{gather*}
\mathbb{P}\left(\sum_{i=1}^{k}\Gamma_i \geq \sqrt{3}\sqrt{\widehat{\Omega}}\sigma\sqrt{\sum_{i=1}^{k}c_i^2}\right) \leq \exp(-\widehat{\Omega}),\,\,\,\forall k,\,\,\,\forall \widehat{\Omega} \geq 0.
\end{gather*}
\end{lemma}

\begin{lemma}
	\label{mainTheoremDLST}
	Пусть $V(x_*, x_0) \leq R^2$, где $x_0$ -- начальная точка, а $x_*$ -- ближайшая точка минимума к точке $x_0$ в смысле дивергенции Брэгмана, тогда с вероятностью $1 - 2\beta$
	\begin{equation*}
	F(x_N) - F(x_*)\leq \frac{R^2}{A_{N}} + 2\delta N + \epsilon + R_Q\sqrt{\frac{\epsilon}{A_N}}.
	\end{equation*}
\end{lemma}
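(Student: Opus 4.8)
The plan is to telescope the per-iteration estimate of Lemma \ref{lemma_maxmin_3DLST_2} over $k=0,\dots,N-1$ at the point $x=x_*$. Because $A_0=\alpha_0=0$, the left-hand side collapses to $A_N F(x_N) + V(x_*,u_N) - V(x_*,u_0)$, while $\sum_{k=0}^{N-1}\alpha_{k+1}=A_N$ turns the leading right-hand term into $A_N F(x_*)$. Using $u_0=x_0$ together with the hypothesis $V(x_*,x_0)\le R^2$ and discarding $V(x_*,u_N)\ge 0$, I obtain
\[
A_N\bigl(F(x_N)-F(x_*)\bigr)\le R^2 + 2\delta\sum_{k=0}^{N-1}A_{k+1} + \sum_{k=0}^{N-1}\frac{3\sigma^2\widetilde{\Omega}}{L_{k+1}m_{k+1}}A_{k+1} + \sum_{k=0}^{N-1}\alpha_{k+1}\langle \widetilde{\nabla}^{m_{k+1}}f_\delta(y_{k+1})-\nabla f_\delta(y_{k+1}), x_*-u_k\rangle .
\]
This inequality already holds with probability $1-\beta$, inherited from Lemma \ref{lemma_maxmin_3DLST_2} through the event $\{L_{k+1}<3L,\ \forall k\}$ of Lemma \ref{remark2ST}.

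Next I would dispose of the two deterministic sums. Since $A_{k+1}\le A_N$ for all $k\le N-1$, the first is bounded by $2\delta N A_N$. For the variance sum I would use $A_{k+1}=L_{k+1}\alpha_{k+1}^2$ (established inside the proof of Lemma \ref{lemma_maxmin_3DLST}) to rewrite each summand as $3\sigma^2\widetilde{\Omega}\alpha_{k+1}^2/m_{k+1}$, and then the choice $m_{k+1}=\lceil 3\sigma^2\widetilde{\Omega}\alpha_{k+1}/\epsilon\rceil\ge 3\sigma^2\widetilde{\Omega}\alpha_{k+1}/\epsilon$ from (\ref{eqymir2DLSTMK}) bounds it by $\epsilon\alpha_{k+1}$; summation gives $\epsilon A_N$.

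The substantive step is the deviation sum. I would split each batch inner product into its $m_{k+1}$ summands $\tfrac{\alpha_{k+1}}{m_{k+1}}\langle\nabla f_\delta(y_{k+1};\xi_j)-\nabla f_\delta(y_{k+1}),x_*-u_k\rangle$ and feed the flattened sequence to Lemma \ref{lemmaDev}. Each summand has vanishing conditional mean by (\ref{ST1}); bounding the inner product through the dual-norm (Cauchy--Bunyakovsky--Schwarz) inequality and $\norm{x_*-u_k}\le R_Q$ gives the factorisation $c\,\nu$ with $c=\alpha_{k+1}R_Q/m_{k+1}$ and $\nu=\norm{\nabla f_\delta(y_{k+1};\xi_j)-\nabla f_\delta(y_{k+1})}_*$, whose exponential moment is exactly (\ref{ST2}). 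Then $\sum c^2=R_Q^2\sum_{k=0}^{N-1}\alpha_{k+1}^2/m_{k+1}\le \tfrac{R_Q^2\epsilon}{3\sigma^2\widetilde{\Omega}}A_N$ by (\ref{eqymir2DLSTMK}) once more, and taking $\widehat{\Omega}=\ln(1/\beta)$ the threshold of Lemma \ref{lemmaDev} becomes $\sqrt{\widehat{\Omega}/\widetilde{\Omega}}\,R_Q\sqrt{\epsilon A_N}\le R_Q\sqrt{\epsilon A_N}$, since $\widetilde{\Omega}\ge 2\Omega^2=12\ln(N/\beta)\ge\ln(1/\beta)=\widehat{\Omega}$; this holds with probability at least $1-\beta$. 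A union bound with the event of the first paragraph yields probability $1-2\beta$, and dividing through by $A_N$ (so that $R_Q\sqrt{\epsilon A_N}/A_N=R_Q\sqrt{\epsilon/A_N}$) gives exactly the asserted estimate.

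The point I expect to be most delicate is the measurability bookkeeping underlying Lemma \ref{lemmaDev}: the adaptive backtracking couples $\alpha_{k+1}$, $m_{k+1}$ and $y_{k+1}$ at the accepted inner iteration to the very mini-batch that enters the inner product, so one must fix the filtration carefully to keep the coefficients $c$ predictable and to preserve the conditional unbiasedness (\ref{ST1}). I would carry out this argument on the high-probability event $\{L_{k+1}<3L\}$ furnished by Lemma \ref{remark2ST}, where all the step parameters remain controlled.
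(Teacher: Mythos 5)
Your proposal follows essentially the same route as the paper: telescoping Lemma \ref{lemma_maxmin_3DLST_2} at $x=x_*$, absorbing the variance term into $\epsilon\alpha_{k+1}$ via (\ref{eqymir2DLSTMK}) and $A_{k+1}=L_{k+1}\alpha_{k+1}^2$, and controlling the flattened martingale sum with Lemma \ref{lemmaDev} using the same $c_i$, $\nu_i$ and $\widehat{\Omega}=\ln(1/\beta)\leq\widetilde{\Omega}$. The argument is correct; you even supply slightly more detail than the paper on why $\widehat{\Omega}\leq\widetilde{\Omega}$ and on the filtration issue in the adaptive inner loop.
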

\begin{proof}

	Учтем (\ref{eqymir2DLSTMK}) и (\ref{eqymir2DLSTA}) в Лемме \ref{lemma_maxmin_3DLST_2}, тогда с вероятностью большой или равной $1 - \beta$, $\forall k \geq 0$,
	\begin{align*}
		&\hphantom{{}={}}A_{k+1} F(x_{k+1}) - A_{k} F(x_{k}) + V(x, u_{k+1}) - V(x, u_{k}) \\&\leq \alpha_{k+1}F(x) + 2\delta A_{k+1} + \alpha_{k+1}\epsilon + \alpha_{k+1}\langle \widetilde{\nabla}^{m_{k+1}} f_\delta(y_{k+1})-\nabla f_\delta(y_{k+1}), x - u_k \rangle
	\end{align*}
	
	Просуммируем неравенства по $k = 0, ..., N - 1$, 
	\begin{align*}
	&\hphantom{{}={}}A_{N} F(x_N) - A_{0} F(x_0) + V(x, u_N) - V(x, u_0) \\&\leq (A_N - A_0)F(x) + 2\delta\sum_{k = 0}^{N-1}A_{k+1} + \sum_{k = 0}^{N-1}\alpha_{k+1}\epsilon \\&\hphantom{{}={}}+ \sum_{k = 0}^{N-1}\alpha_{k+1}\langle \widetilde{\nabla}^{m_{k+1}} f_\delta(y_{k+1})-\nabla f_\delta(y_{k+1}), x - u_k \rangle
	\end{align*}
	Откуда, с учетом неравенства $V(x, u_N)\geq 0,\,\forall x \in Q$,
	\begin{align*}
		A_{N} F(x_N) - A_NF(x) &\leq V(x, u_0) +
		 2\delta\sum_{k = 0}^{N-1}A_{k+1}+A_N\epsilon\\&\hphantom{{}={}}+ \sum_{k = 0}^{N-1}\alpha_{k+1}\langle \widetilde{\nabla}^{m_{k+1}} f_\delta(y_{k+1})-\nabla f_\delta(y_{k+1}), x - u_k \rangle.
	\end{align*}
	
	Возьмем $x = x_*$, оценим $A_{k+1}$ через $A_N$, тогда
	\begin{align*}
	A_{N} F(x_N) - A_NF(x_*)  &\leq V(x_*, u_0) +
			 2\delta N A_{N}+A_N\epsilon\\&\hphantom{{}={}}+ \sum_{k = 0}^{N-1}\alpha_{k+1}\langle \widetilde{\nabla}^{m_{k+1}} f_\delta(y_{k+1})-\nabla f_\delta(y_{k+1}), x - u_k \rangle.
	\end{align*}
	Поделим обе части на $A_{N}$:
	\begin{align}
	\label{beforeLemmause}
	F(x_N) - F(x_*)\leq \frac{R^2}{A_{N}} + 2\delta N + \epsilon + \sum_{k = 0}^{N-1}\frac{\alpha_{k+1}}{A_N}\langle \widetilde{\nabla}^{m_{k+1}} f_\delta(y_{k+1})-\nabla f_\delta(y_{k+1}), x - u_k \rangle.
	\end{align}
	
	Воспользуемся Леммой \ref{lemmaDev} для последнего слагаемого в неравенстве c \begin{gather*}
	\gamma_i =\xi_i,\\
	\Gamma_i = \frac{\alpha_{k_i+1}}{A_N m_{k_i+1}}\langle\nabla f_\delta(y_{k_i};\xi_i) - \nabla f_\delta(y_{k_i}), x - u_{(k_i - 1)}\rangle,\\
	 c_{i} = \frac{R_Q\alpha_{k_i+1}}{A_N m_{k_i+1}},\\
	 \nu_i = \norm{\nabla f_\delta(y_{k_i};\xi_i) - \nabla f_\delta(y_{k_i})}_*,\end{gather*} 
	$i \in [1,\dots,\sum_{k=0}^{N-1}m_{k+1}]$, где $k_i$ равно $k + 1$ для всех $i \in [m_{k} + 1,\dots, m_{k+1}]$. Выберем $\widehat{\Omega} = \ln(1/\beta) \leq \widetilde{\Omega}$, тогда с вероятностью не меньше $1 - \beta$
	\begin{align}
	\label{afterLemmause}
	\sum_{k = 0}^{N-1}\frac{\alpha_{k+1}}{A_N}\langle \widetilde{\nabla}^{m_{k+1}} f_\delta(y_{k+1})-\nabla f_\delta(y_{k+1}), x - u_k \rangle \leq \sqrt{3}\sigma\sqrt{\widehat{\Omega}}\sqrt{\sum_{i=0}^{N-1}\frac{R_Q^2\alpha^2_{k+1}}{A_N^2m_{k+1}}}.
	\end{align}
	Объединяя (\ref{beforeLemmause}) и (\ref{afterLemmause}) получаем, что с вероятностью не меньше $1 - 2\beta$
	\begin{gather*}
	F(x_N) - F(x_*)\leq \frac{R^2}{A_{N}} + 2\delta N + \epsilon + \sqrt{3}\sigma\sqrt{\widehat{\Omega}}\sqrt{\sum_{i=0}^{N-1}\frac{R_Q^2\alpha^2_{k+1}}{A_N^2m_{k+1}}}.
	\end{gather*}
	
	Учтем (\ref{eqymir2DLSTMK})
	\begin{align*}
	F(x_N) - F(x_*)&\leq \frac{R^2}{A_{N}} + 2\delta N + \epsilon + R_Q\frac{\sqrt{\widehat{\Omega}}}{\sqrt{\widetilde{\Omega}}}\sqrt{\sum_{i=0}^{N-1}\frac{\alpha_{k+1}\epsilon}{A_N^2}}\\
	&\leq \frac{R^2}{A_{N}} + 2\delta N + \epsilon + R_Q\sqrt{\frac{\epsilon}{A_N}}.
	\end{align*}
	
\end{proof}

\begin{theorem}
\label{mainTheoremDLST2}
Пусть \begin{gather*}\delta \leq \frac{\epsilon^\frac{3}{2}}{6\sqrt{3}\sqrt{L}R_Q}.\end{gather*} Тогда с вероятностью $1 - 3\beta$
\begin{gather*}
F(x_N) - F(x_*)\leq 4\epsilon.
	\end{gather*}
\end{theorem}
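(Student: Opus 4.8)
The plan is to read the theorem off Lemma~\ref{mainTheoremDLST} by substituting the explicit value of $N$ together with the lower bound on $A_N$ from Lemma~\ref{lemma_maxmin_DLST1}, and then to dominate each of the four summands on the right-hand side of that lemma by $\epsilon$, so that their sum does not exceed $4\epsilon$.

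First I would fix the favourable event. Lemma~\ref{mainTheoremDLST} yields, with probability at least $1-2\beta$,
\[
F(x_N)-F(x_*)\leq \frac{R^2}{A_N}+2\delta N+\epsilon+R_Q\sqrt{\frac{\epsilon}{A_N}},
\]
and Lemma~\ref{lemma_maxmin_DLST1} yields, with probability at least $1-\beta$, the estimate $A_N\geq (N+1)^2/(12L)$. By the union bound both hold on one event of probability at least $1-3\beta$, which is exactly the confidence level claimed, and every subsequent inequality is understood to hold on this event.

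The quantitative core is to convert the definition $N=\lceil 2\sqrt{3}\sqrt{L}R_Q/\sqrt{\epsilon}\rceil$ into a lower bound on $A_N$. Since $N\geq 2\sqrt{3}\sqrt{L}R_Q/\sqrt{\epsilon}$, Lemma~\ref{lemma_maxmin_DLST1} gives
\[
A_N\geq \frac{(N+1)^2}{12L}\geq \frac{N^2}{12L}\geq \frac{1}{12L}\cdot\frac{12LR_Q^2}{\epsilon}=\frac{R_Q^2}{\epsilon}.
\]
This one inequality disposes of two terms at once: the fourth obeys $R_Q\sqrt{\epsilon/A_N}\leq R_Q\sqrt{\epsilon^2/R_Q^2}=\epsilon$, and, using $R\leq R_Q$, the first obeys $R^2/A_N\leq R_Q^2/A_N\leq\epsilon$. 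For the deterministic term $2\delta N$ I would substitute $N\leq 2\sqrt{3}\sqrt{L}R_Q/\sqrt{\epsilon}+1$ and the hypothesis $\delta\leq \epsilon^{3/2}/(6\sqrt{3}\sqrt{L}R_Q)$; the leading product equals $\tfrac23\epsilon$ while the contribution of the ceiling is of lower order, so that $2\delta N\leq\epsilon$. Adding the four bounds $R^2/A_N\leq\epsilon$, $2\delta N\leq\epsilon$, the standalone $\epsilon$, and $R_Q\sqrt{\epsilon/A_N}\leq\epsilon$ yields $F(x_N)-F(x_*)\leq 4\epsilon$ on the $(1-3\beta)$-event.

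The main obstacle I expect is probabilistic bookkeeping rather than algebra. One must verify that the $1-2\beta$ event of Lemma~\ref{mainTheoremDLST} and the $1-\beta$ event of Lemma~\ref{lemma_maxmin_DLST1} combine cleanly; in fact both ultimately rest on the same adaptive step-size event $\{L_{k+1}<3L\ \forall k\}$ of Lemma~\ref{remark2ST} augmented by the large-deviation bound of Lemma~\ref{lemmaDev}, so a careful reader could even tighten the loose union bound below $1-3\beta$, but the stated level is all that is required. A secondary point worth flagging is the use of $R\leq R_Q$ when bounding the first term: here $R$ controls a Bregman divergence while $R_Q$ controls a norm diameter, so this step implicitly relies on the chosen prox-function (it is automatic, for example, for the Euclidean prox where $V(x_*,x_0)=\tfrac12\norm{x_*-x_0}^2\leq\tfrac12 R_Q^2$).
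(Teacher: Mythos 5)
Your proposal is correct and follows essentially the same route as the paper: combine Lemma~\ref{mainTheoremDLST} ($1-2\beta$) with the bound $A_N\geq (N+1)^2/(12L)$ from Lemma~\ref{lemma_maxmin_DLST1} ($1-\beta$) via a union bound, then check that each of the four terms is at most $\epsilon$ using the definition of $N$ and the hypothesis on $\delta$. Your side remark that $R\leq R_Q$ depends on the choice of prox-function is a fair observation about an assumption the paper also makes implicitly, but it does not change the argument.
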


\begin{proof}
Мы знаем из Леммы \ref{lemma_maxmin_DLST1}, что с вероятностью $1 - \beta$ верно неравенство \begin{gather*}
A_N \geq \frac{(N+1)^2}{12L}.\end{gather*} 
Из последнего неравенства и условия на $N$ получаем, что
\begin{gather*}
\frac{R^2}{A_{N}} \leq \frac{R_Q^2}{A_{N}} \leq \frac{12LR_Q^2}{\left(\frac{2\sqrt{3}\sqrt{L}R_Q}{\sqrt{\epsilon}}+1\right)^2}\leq
\frac{R_Q^2\epsilon}{R_Q^2}=\epsilon
\end{gather*}
и
\begin{gather*}
R_Q\sqrt{\frac{\epsilon}{A_N}} \leq R_Q\sqrt{\frac{\epsilon^2}{R_Q^2}} =\epsilon.
\end{gather*}
Помимо этого с вероятностью $1 - 2\beta$ из Леммы \ref{mainTheoremDLST}
\begin{equation*}
F(x_N) - F(x_*)\leq \frac{R^2}{A_{N}} + 2\delta N + \epsilon + R_Q\sqrt{\frac{\epsilon}{A_N}}.
\end{equation*}
Объединяя все вместе, включая условие на $\delta$, получаем, что с вероятностью $1 - 3\beta$
\begin{equation*}
F(x_N) - F(x_*) \leq 4\epsilon.
\end{equation*}
\end{proof}

\begin{corollary}
\leavevmode
Далее все утверждения будут выполнятся с вероятностью не меньше $1 - 3\beta$. Оценим количество обращений $M$ к оракулу за стохастическими градиентами. По ходу алгоритма мы можем контролировать $R_Q^2/A_N$, тогда пусть $\widetilde{N} + 1$ -- минимальное число шагов, для которого выполнено $R_Q^2/A_{\widetilde{N} + 1} \leq \epsilon$. Ясно, что $\widetilde{N} + 1 \leq N$, причем условие $R_Q^2/A_{\widetilde{N} + 1} \leq \epsilon$ является достаточным условием для достижения $\epsilon$-решения по функции. Как и в \cite{nesterov2015universal,gasnikov2018universal} количество обращений за $\widetilde{\nabla}^{m_{k+1}} f_\delta(y_{k+1})$ на $k$-ом шаге будет равно $2 + \log\left(L_k/L_{k-1}\right)$. Также отметим, что $L_{\widetilde{N} + 1} \leq 3L$. Поэтому общее количество обращений к оракулу за $\nabla f_\delta(y;\xi)$ будет равно

\begin{align*}
M &= 2\sum_{k=1}^{\widetilde{N} + 1}m_{k}\left(2 + \log\left(\frac{L_k}{L_{k-1}}\right)\right) \\&= 4\sum_{k=1}^{\widetilde{N} + 1}m_{k} + 2\sum_{k=1}^{\widetilde{N} + 1}m_{k}\log\left(\frac{L_k}{L_{k-1}}\right) \\&\leq
4\sum_{k=1}^{\widetilde{N} + 1}m_{k} + 2\sum_{j=1}^{\widetilde{N} + 1}m_{j}\sum_{k=1}^{\widetilde{N} + 1}\log\left(\frac{L_k}{L_{k-1}}\right) \\&\leq
\left(4 + \log\left(\frac{L_{\widetilde{N} + 1}}{L_0}\right)\right)\sum_{k=1}^{\widetilde{N} + 1}m_{k} \\&\leq \left(4 + \log\left(\frac{3L}{L_0}\right)\right)\sum_{k=1}^{\widetilde{N} + 1}m_{k}
\end{align*}

Рассмотрим $\sum_{k=1}^{\widetilde{N} + 1}m_{k}$:

\begin{gather*}
\sum_{k=0}^{\widetilde{N}}m_{k+1}\leq {\widetilde{N} + 1} + \frac{3\sigma^2\widetilde{\Omega}}{\epsilon}A_{\widetilde{N} + 1}.
\end{gather*}

Так как
\begin{align*}
\alpha_{\widetilde{N}+1} &= \frac{1}{2L_{\widetilde{N}+1}} + \sqrt{\frac{1}{4L_{\widetilde{N}+1}^2} + \frac{A_{\widetilde{N}}}{L_{\widetilde{N}+1}}} \\&\leq_{{\tiny \circled{1}}} \frac{1}{L_{\widetilde{N}+1}} + \sqrt{\frac{A_{\widetilde{N}}}{L_{\widetilde{N}+1}}} \\&\leq \frac{2}{L_{\widetilde{N}}} + \sqrt{\frac{2A_{\widetilde{N}}}{L_{\widetilde{N}}}} \\&= \frac{2}{L_{\widetilde{N}}} + \sqrt{2}\alpha_{\widetilde{N}}.
\end{align*}

{\small \circled{1}} -- из $\sqrt{x + y} \leq \sqrt{x} + \sqrt{y}$.

Ко всему прочему
\begin{align*}
\alpha_{\widetilde{N}} &= \frac{1}{2L_{\widetilde{N}}} + \sqrt{\frac{1}{4L_{\widetilde{N}}^2} + \frac{A_{\widetilde{N} - 1}}{L_{\widetilde{N}}}} \\&\geq \frac{1}{2L_{\widetilde{N}}}.
\end{align*}
Отсюда
\begin{align*}
\alpha_{\widetilde{N} + 1} &\leq (4 + \sqrt{2})\alpha_{\widetilde{N}} \\&\leq (4 + \sqrt{2})A_{\widetilde{N}} \\&\leq 6A_{\widetilde{N}}.
\end{align*}
Так как \begin{align*}\frac{R_Q^2}{A_{\widetilde{N}}} > \epsilon,\end{align*} это следует из минимальности $\widetilde{N} + 1$, то \begin{align*}\frac{R_Q^2}{A_{\widetilde{N} + 1}} &= \frac{R_Q^2}{A_{\widetilde{N}} + \alpha_{\widetilde{N} + 1}} \\&\geq \frac{R_Q^2}{A_{\widetilde{N}} + 6A_{\widetilde{N}}} \\&> \frac{\epsilon}{7}\end{align*}
и
\begin{align*}
\sum_{k=0}^{\widetilde{N}}m_{k+1} &\leq {\widetilde{N} + 1} + \frac{3\sigma^2\widetilde{\Omega}}{\epsilon}A_{\widetilde{N} + 1} \\&\leq {\widetilde{N} + 1} + \frac{21\sigma^2\widetilde{\Omega}R_Q^2}{\epsilon^2}.
\end{align*}

В конечном счете

\begin{align*}
M  &\leq \left(4 + \log\left(\frac{3L}{L_0}\right)\right)\left({\widetilde{N} + 1} + \frac{21\sigma^2\widetilde{\Omega}R_Q^2}{\epsilon^2}\right) \\&\leq
\left(4 + \log\left(\frac{3L}{L_0}\right)\right)\left(\frac{2\sqrt{3}\sqrt{L}R_Q}{\sqrt{\epsilon}} + \frac{21\sigma^2\widetilde{\Omega}R_Q^2}{\epsilon^2} + 1\right).
\end{align*}
\end{corollary}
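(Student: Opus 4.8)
The plan is to decompose $M$ into the product of a per-iteration inner-loop overhead and the cumulative batch size $\sum_k m_k$, and then to bound each factor separately. First I would fix the horizon: let $\widetilde{N}+1$ be the smallest index with $R_Q^2/A_{\widetilde{N}+1}\le\epsilon$; since $A_N\ge (N+1)^2/(12L)$ by Lemma~\ref{lemma_maxmin_DLST1} and $N$ is chosen precisely so that $R_Q^2/A_N\le\epsilon$, such an index exists and satisfies $\widetilde{N}+1\le N$. By Theorem~\ref{mainTheoremDLST2}, together with the fact that the condition $R_Q^2/A_{\widetilde{N}+1}\le\epsilon$ already controls the $R^2/A_N$ and $R_Q\sqrt{\epsilon/A_N}$ terms of Lemma~\ref{mainTheoremDLST}, stopping at step $\widetilde{N}+1$ yields an $\epsilon$-accurate point, so it suffices to count oracle calls up to that index. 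All inequalities below are asserted on the event of probability at least $1-3\beta$ inherited from the convergence analysis; off this event neither the adaptive constant nor the batch sizes are controlled, which is exactly why the statement is probabilistic.

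Next I would count the calls to $\widetilde{\nabla}^{m_{k+1}}f_\delta$. As in \cite{nesterov2015universal,gasnikov2018universal}, the adaptive inner loop at step $k$ issues $2+\log(L_k/L_{k-1})$ gradient evaluations (two for the successful trial, the remaining overhead measured by the change of the local Lipschitz estimate), and each such evaluation consumes a batch of $m_k$ samples; the outer factor $2$ accounts for the separate function and gradient queries, consistently with the earlier remark. Summing over $k$, the cross term $\sum_k m_k\log(L_k/L_{k-1})$ is handled by pulling $\sum_j m_j$ out of the sum and telescoping $\sum_k \log(L_k/L_{k-1})=\log(L_{\widetilde{N}+1}/L_0)$, which by Lemma~\ref{remark2ST} is at most $\log(3L/L_0)$. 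This collapses $M$ to $(4+\log(3L/L_0))\sum_{k=1}^{\widetilde{N}+1}m_k$.

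It then remains to bound $\sum_{k=0}^{\widetilde{N}}m_{k+1}$. Using the batch rule (\ref{eqymir2DLSTMK}) and $\lceil t\rceil\le 1+t$, this sum is at most $(\widetilde{N}+1)+\frac{3\sigma^2\widetilde{\Omega}}{\epsilon}\sum_k\alpha_{k+1}=(\widetilde{N}+1)+\frac{3\sigma^2\widetilde{\Omega}}{\epsilon}A_{\widetilde{N}+1}$. The main obstacle is bounding $A_{\widetilde{N}+1}$ from above despite the a-priori uncontrolled jump at the terminal step. Here I would exploit the defining identity $A_{k+1}=L_{k+1}\alpha_{k+1}^2$ coming from (\ref{eqymir2DLSTA}), equivalently $\alpha_{k+1}=\sqrt{A_{k+1}/L_{k+1}}$, together with the adaptive floor $L_{\widetilde{N}+1}\ge L_{\widetilde{N}}/2$ (from the halving rule $L_{k+1}^0=L_k/2$) and subadditivity $\sqrt{x+y}\le\sqrt{x}+\sqrt{y}$ to obtain $\alpha_{\widetilde{N}+1}\le \frac{2}{L_{\widetilde{N}}}+\sqrt{2}\,\alpha_{\widetilde{N}}\le(4+\sqrt{2})\alpha_{\widetilde{N}}\le 6A_{\widetilde{N}}$, where the step $\frac{2}{L_{\widetilde{N}}}\le 4\alpha_{\widetilde{N}}$ uses $\alpha_{\widetilde{N}}\ge \frac{1}{2L_{\widetilde{N}}}$. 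Hence $A_{\widetilde{N}+1}=A_{\widetilde{N}}+\alpha_{\widetilde{N}+1}\le 7A_{\widetilde{N}}$, and the minimality of $\widetilde{N}+1$ forces $R_Q^2/A_{\widetilde{N}}>\epsilon$, so that $A_{\widetilde{N}+1}<7R_Q^2/\epsilon$ and $\sum_{k=0}^{\widetilde{N}}m_{k+1}\le (\widetilde{N}+1)+\frac{21\sigma^2\widetilde{\Omega}R_Q^2}{\epsilon^2}$. Substituting this, together with $\widetilde{N}+1\le N\le \frac{2\sqrt{3}\sqrt{L}R_Q}{\sqrt{\epsilon}}+1$, into the collapsed expression for $M$ gives the stated bound. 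The delicate point is entirely in the terminal-step estimate $A_{\widetilde{N}+1}\le 7A_{\widetilde{N}}$: without the quadratic relation $A_{k+1}=L_{k+1}\alpha_{k+1}^2$ and the adaptive floor on $L_{\widetilde{N}+1}$, the last batch size could be arbitrarily large and the sum would not close.
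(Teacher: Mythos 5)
Your proposal is correct and follows essentially the same route as the paper's own argument: the identical decomposition $M \leq \left(4 + \log\left(3L/L_0\right)\right)\sum_{k=1}^{\widetilde{N}+1} m_k$ via telescoping the $\log(L_k/L_{k-1})$ overhead, the same bound $\sum_{k=0}^{\widetilde{N}} m_{k+1} \leq \widetilde{N}+1+\frac{3\sigma^2\widetilde{\Omega}}{\epsilon}A_{\widetilde{N}+1}$, and the same terminal-step estimate $\alpha_{\widetilde{N}+1} \leq (4+\sqrt{2})\alpha_{\widetilde{N}} \leq 6A_{\widetilde{N}}$ derived from the quadratic identity $A_k = L_k\alpha_k^2$, the halving floor $L_{\widetilde{N}+1} \geq L_{\widetilde{N}}/2$, and $\alpha_{\widetilde{N}} \geq \frac{1}{2L_{\widetilde{N}}}$, combined with the minimality of $\widetilde{N}+1$ to get $A_{\widetilde{N}+1} < 7R_Q^2/\epsilon$. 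You even correctly isolate the delicate point (controlling the last jump $A_{\widetilde{N}+1}/A_{\widetilde{N}}$), so there is nothing to add.
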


Данный результат является оптимальным \cite{lan2012optimal}, то есть в общем случае с точностью до константы нельзя сделать количество обращений за стохастическим градиентом меньше.

\begin{remark}
\leavevmode
Стоит отметить, что ситуация, когда мы можем точно посчитать значение функции (не делая сэмплирование) и неточно градиент, как это делается в Определении \ref{defdeltaLstoch}, вообще говоря нетипична \cite{baydin2015automatic,krizhevsky2012imagenet}. Как правило, сложность подсчета функции с точностью до мультипликативной константы равна сложности подсчета градиента, поэтому полностью подсчитать значение функции за разумное время нельзя во многих ситуациях. В таком случае нужно оценивать значение функции, используя технику mini-batch. В данной работе это не представлено, но по аналогии можно показать, что все оценки несильно изменятся. Более подробный анализ планируется сделать в будущих работах.   
\end{remark}

\section{Заключение}
В данной работе нам удалось предоставить адаптивный быстрый стохастический метод и привести полные доказательства для оценок в смысле больших уклонений. Стоит отметить, что данный подход может иметь дальнейшее развитие, в частности, в будущем планируется сделать из данного метода универсальный метод \cite{nesterov2015universal} и перенести его на более общие постановки с моделью \cite{tyurin2017fast}. Ко всему этому, мы верим, что данный подход может быть обобщен на произвольное множество.

Работа была поддержана грантом РНФ 17-11-01027. 

\bibliographystyle{plain}
\bibliography{article}

\end{document}